\documentclass[letterpaper,11pt,twoside]{article}

\usepackage{cmap}
\usepackage[T1]{fontenc}
\usepackage[utf8]{inputenc}
\usepackage[letterpaper,textwidth=5.25in,textheight=8.85in,centering,headheight=13pt,headsep=18pt,footskip=30pt]{geometry}

\usepackage{lmodern}
\usepackage{libertine}

\usepackage{mathrsfs}
\usepackage{amssymb}
\usepackage{amsmath}
\usepackage{amsthm}
\usepackage{microtype}

\usepackage[usenames,dvipsnames]{xcolor}
\definecolor{myblue}{rgb}{0.21, 0.34, 0.74}
\definecolor{mygrey}{rgb}{0.55, 0.57, 0.67}
\definecolor{myred}{rgb}{0.79, 0.0, 0.09}

\usepackage{graphicx}
\usepackage{tikz}
\usetikzlibrary{arrows.meta}
\usepackage{enumitem}
\usepackage{upgreek}
\usepackage{mathtools}
\usepackage{bm}
\usepackage[scr=boondoxo]{mathalfa}

\usepackage{comment}
\usepackage{float}
\usepackage{placeins}
\usepackage{subcaption}
\usepackage[font=small,labelfont=bf,labelsep=period]{caption}

\numberwithin{figure}{section}
\numberwithin{table}{section}

\usepackage[nottoc,notlot,notlof]{tocbibind}
\usepackage{cite}
\usepackage[colorlinks=true,linkcolor=blue,citecolor=blue,urlcolor=blue,bookmarksopen=true]{hyperref}
\usepackage[noabbrev,capitalize,nameinlink]{cleveref}
\usepackage{stmaryrd}

\usepackage{fancyhdr}

\newcommand{\CPAMshorttitle}{A LLOYD-STABILIZED VORONO\"I PARTICLE METHOD}
\newcommand{\CPAMshortauthors}{B. DESPR\'ES AND B. GESHKOVSKI}

\fancypagestyle{cpamfirst}{%
  \fancyhf{}%
  \fancyfoot[L]{}
}

\usepackage{titlesec}
\titleformat{\section}
  {\centering\normalfont\large\bfseries}{\thesection}{0.75em}{}
\titlespacing*{\section}{0pt}{2.5ex plus 0.8ex minus 0.2ex}{1.3ex plus 0.2ex}

\titleformat{\subsection}
  {\normalfont\bfseries}{\thesubsection}{0.75em}{}
\titlespacing*{\subsection}{0pt}{2.0ex plus 0.6ex minus 0.2ex}{0.8ex plus 0.1ex}

\titleformat{\subsubsection}
  {\normalfont\bfseries}{\thesubsubsection}{0.75em}{}
\titleformat{name=\subsubsection,numberless}
  {\normalfont\bfseries}{}{0pt}{}
\titlespacing*{\subsubsection}{0pt}{1.7ex plus 0.5ex minus 0.2ex}{0.6ex plus 0.1ex}

\titleformat{\paragraph}[runin]
  {\normalfont\bfseries}{}{0pt}{}[.]
\titlespacing*{\paragraph}{0pt}{1.2ex plus 0.4ex minus 0.2ex}{0.5em}

\newtheoremstyle{cpamplain}%
  {8pt plus 2pt minus 2pt}
  {8pt plus 2pt minus 2pt}
  {\itshape}
  {}
  {\normalfont\bfseries}
  {.}
  {0.5em}
  {\MakeUppercase{\thmname{#1}}\ \thmnumber{#2}\thmnote{ {\normalfont(#3)}}}%
\newtheoremstyle{cpamdefinition}%
  {8pt plus 2pt minus 2pt}
  {8pt plus 2pt minus 2pt}
  {\normalfont}
  {}
  {\normalfont\bfseries}
  {.}
  {0.5em}
  {\MakeUppercase{\thmname{#1}}\ \thmnumber{#2}\thmnote{ {\normalfont(#3)}}}%

\theoremstyle{cpamplain}
\newtheorem{theorem}{Theorem}[section]
\newtheorem{proposition}[theorem]{Proposition}

\newtheorem{corollary}[theorem]{Corollary}

\theoremstyle{cpamdefinition}

\newtheorem{remark}[theorem]{Remark}

\crefname{theorem}{Theorem}{Theorems}
\crefname{proposition}{Proposition}{Propositions}
\crefname{lemma}{Lemma}{Lemmas}
\crefname{corollary}{Corollary}{Corollaries}
\crefname{definition}{Definition}{Definitions}
\crefname{remark}{Remark}{Remarks}
\crefname{example}{Example}{Examples}

\makeatletter
\renewcommand{\maketitle}{%
  \thispagestyle{cpamfirst}%
  \begingroup
  \centering
  \vspace*{0.56in}%
  {\Large\bfseries\@title\par}%
  \vspace{0.35in}%
  {\normalsize\@author\par}%
  \vspace{0.28in}%
  \endgroup
}
\makeatother

\newcommand{\CPAMauthor}[1]{{\normalsize #1}}
\newcommand{\CPAMaffiliation}[1]{{\itshape\small #1}}
\newcommand{\CPAMand}{{\normalsize AND}}

\renewenvironment{abstract}{%
  \begin{center}\bfseries Abstract\end{center}%
  \small
  \begin{list}{}{\leftmargin=0.55in\rightmargin=0.55in}%
  \item\relax
}{%
  \end{list}\normalsize
}

\newcommand{\R}{\mathbb{R}}

\newcommand{\diff}{\mathrm{d}}
\newcommand{\dist}{\mathrm{dist}}
\newcommand{\Secref}[1]{\textbf{\hyperref[#1]{Section~\ref*{#1}}}}
\newcommand{\Appref}[1]{\textbf{\hyperref[#1]{Appendix~\ref*{#1}}}}

\let\alpha\upalpha
\let\beta\upbeta
\let\gamma\upgamma
\let\delta\updelta
\let\epsilon\upepsilon
\let\varepsilon\upvarepsilon
\let\zeta\upzeta
\let\eta\upeta
\let\theta\uptheta
\let\vartheta\upvartheta
\let\iota\upiota
\let\kappa\upkappa
\let\lambda\uplambda
\let\mu\upmu
\let\nu\upnu
\let\xi\upxi
\let\pi\uppi
\let\varpi\upvarpi
\let\rho\uprho
\let\varrho\upvarrho
\let\sigma\upsigma
\let\varsigma\upvarsigma
\let\tau\uptau
\let\upsilon\upupsilon
\let\phi\upphi
\let\varphi\upvarphi
\let\chi\upchi
\let\psi\uppsi
\let\omega\upomega
\let\Gamma\Upgamma
\let\Delta\Updelta
\let\Theta\Uptheta
\let\Lambda\Uplambda
\let\Xi\Upxi
\let\Pi\Uppi
\let\Sigma\Upsigma
\let\Upsilon\Upupsilon
\let\Phi\Upphi
\let\Psi\Uppsi
\let\Omega\Upomega

\renewcommand{\leq}{\leqslant}

\renewcommand{\le}{\leqslant}
\renewcommand{\ge}{\geqslant}

\numberwithin{equation}{section}

\hypersetup{pdftitle={A Lloyd-stabilized Voronoi particle method},pdfauthor={Bruno Despres and Borjan Geshkovski}}

\title{A Lloyd-stabilized Vorono\"i particle method}
\author{%
\CPAMauthor{BRUNO DESPR\'ES}\\[-1pt]
\CPAMaffiliation{Laboratoire Jacques-Louis Lions}\\[0.7em]
\CPAMand\\[0.7em]
\CPAMauthor{BORJAN GESHKOVSKI}\\[-1pt]
\CPAMaffiliation{Laboratoire Jacques-Louis Lions}%
}
\date{}

\begin{document}
\setlist[itemize,enumerate]{leftmargin=2em,itemsep=0pt,parsep=0pt}

%
%

\maketitle

%
%

\begin{abstract}
We study the stability and consistency of Lloyd's algorithm used in combination with the Lagrangian transport of a density on a Vorono\"i tessellation.
We show that relaxation rates as strong as $O(h^{-1/2})$, where $h$ is the mesh size, give convergence to the continuity equation in Wasserstein distance at the rate $O(h^{1/4})$. Thus the mesh can be kept regular by the correction alone, without the remeshing that Lagrangian methods usually require. An application is proposed for the compressible Euler equations.
\end{abstract}

\thispagestyle{cpamfirst}

\section{Introduction}

\emph{Vorono\"i methods} discretize a fluid by particles, and use the Vorono\"i tessellation generated by the particles as a moving mesh for a finite volume scheme. They have a long history \cite{pasta,RUSSO199384,PhysRevLett.83.1775,DEVILLERS1996315}, and have seen a surge of activity since the work of Springel \cite{Springel2010}; see \cite{shasha,loulou,DUQUE2023268,DECAMPOS2022114680,GABURRO2020109167} and, for incompressible flows, \cite{kincl}. In this work we consider the compressible Lagrangian Vorono\"i method of \cite{despres2024,desprem}. Like every Lagrangian method, it suffers from mesh degeneration, since near shocks, particles come very close to each other and the Vorono\"i cells are collapsed. The usual remedy is of ALE type, in which the mesh is rezoned from time to time and the solution is remapped onto the new mesh \cite{loulou,kincl}. Here we propose a different stabilization, in which after each time step, every particle is moved towards the centroid of its cell, keeping the method purely Lagrangian. This is one step of Lloyd's algorithm \cite{Lloyd1982}, the classical iteration for computing centroidal Vorono\"i tessellations, and also the update rule of $k$-means clustering \cite{MacQueen1967}.

Our goal in this work is to understand how much this correction can stabilize the mesh, and to find the most general conditions which preserve the consistency of the method. 
Our tools involve three geometric indicators of a Vorono\"i tessellation, and inequalities between them. One indicator is the quantization energy of the particles, which is also the semi-discrete optimal transport cost between the uniform measure on $\Omega$ and the particles. Semi-discrete optimal transport on Vorono\"i and Laguerre cells is at the heart of the Lagrangian schemes of M\'erigot and Mirebeau \cite{MerigotMirebeau2016} and of Gallou\"et and M\'erigot \cite{GallouetMerigot2018,gaga,caca}, where the transport cost acts as a pressure rather than as a stabilizer. 

The practical use of Lloyd's algorithm in a numerical method for fluids with strong gradients, or even with shocks, would call for velocity fields of weak regularity such as $v\in \mathrm{BV}\left((0,T)\times \Omega;\R^2\right)$, and this is indeed the regularity we meet in our numerical experiments on the compressible Euler equations.
The mathematical theory is far more demanding at that level of regularity, and we therefore work throughout with $v\in L^1((0,T);C^{0,1}(\Omega;\R^2))$.
This choice is deliberate as it reduces every transport estimate to an elementary Gronwall argument, so that the mechanism of the stabilization, and the interplay between the three geometric indicators introduced below, stay visible rather than buried in technicalities.

\subsection{Moving Vorono\"i tessellations}\label{sec:moving}

Throughout the paper $\Omega$ is either a bounded convex Lipschitz domain of $\mathbb R^2$ or a flat torus $\Omega=\R^2/\mathcal L$, where $\mathcal L:=\mathbb Z\ell_1+\mathbb Z\ell_2$ and $\ell_1,\ell_2\in\R^2$ are linearly independent.
On the torus, functions on $\Omega$ are identified with $\mathcal L$-periodic functions on $\R^2$, a point of $\Omega$ with any of its representatives in $\R^2$, and $|x-y|$ denotes the distance on the torus, so that if $\tilde x,\tilde y\in\R^2$ are representatives of $x,y\in\Omega$, then
\(|x-y|=\min_{k\in\mathcal L}|\tilde x-\tilde y-k|,\)
where on the right-hand side $|\cdot|$ is the Euclidean norm of $\R^2$. Balls and diameters are understood accordingly.
At a discrete time $t^k$
we take a collection of $N$ distinct particles (or generators)
$
X^k=(x_1^k,\dots,x_N^k)\in\Omega^N.
$
The particles are distinct, that is $x_i^k\neq x_j^k$ for $i\neq j$.
We denote by $\{\Omega_i^k\}_{i=1}^N$ the associated Vorono\"i tessellation of $\Omega$
\[
\Omega_i^k:=\{x\in\Omega:\ |x-x_i^k|< |x-x_j^k| \mbox{ for }1\leq  j\leq N,\ j\neq i\}, \quad 1\leq i \leq N.
\]
Each cell carries a fixed conserved mass $M_i> 0$ (initialized at $k=0$ and then kept constant). We define
a local density $\rho_i^k:=\frac{M_i}{|\Omega_i^k|} >0$. It defines a global density field
\begin{equation}\label{eq:rho-def}
\rho^k(x):=\sum_{i=1}^N \rho_i^k\,\mathbf 1_{\Omega_i^k}(x).
\end{equation}
Then, given velocities $v_i^k$ for $1\leq i \leq N$, the particles evolve to the next discrete time $t^{k+1}$ via
\begin{equation}\label{eq:lag-step-0}
x_i^{k+1,-}=x_i^k+\Delta t^k\,v_i^k,
\qquad 1\le i\le N.
\end{equation}
In the theoretical part of this work, the velocities are those of a given velocity field 
$$
v_i^k=v(t^k, x_i^k), \qquad v:[0,T)\times \Omega \to \R^2.
$$
In numerical applications they are instead computed, for instance by a Riemann solver for the compressible Euler equations, and the time step $\Delta t^k>0$ satisfies a CFL-type restriction.
In both cases, the positions $X^{k+1,-}$ at time $t^{k+1}$ determine a new Vorono\"i tessellation $\{\Omega_i^{k+1,-}\}$.
The density field is obtained by Lagrangian transport of the generators, that is
\begin{equation}\label{eq:rho-def-1}
\rho^{k+1,-}(x):=\sum_{i=1}^N \rho_i^{k+1,-}\,\mathbf 1_{\Omega_i^{k+1,-}}(x), \qquad \rho_i^{k+1,-}:=\frac{M_i}{|\Omega_i^{k+1,-}|} >0.
\end{equation}

\subsection{Stabilizing \texorpdfstring{\eqref{eq:lag-step-0}}{(1.2)} with Lloyd's algorithm}\label{sec:lloyd}

There are many situations reported in the literature where the displacement of the Vorono\"i tessellation and the Lagrangian transport of
the density lead to certain kinds of numerical instabilities, see \cite{shasha,Springel2010,GABURRO2020109167,kincl,despres2024,GallouetMerigot2018}.
We propose to mitigate these instabilities by correcting the step \eqref{eq:lag-step-0} with one step of Lloyd's algorithm.
Figure~\ref{fig:lloyd-summary} shows the effect of this correction on a long computation.

\begin{figure}[t]
\centering
\begin{tikzpicture}
\node[inner sep=0pt] (A) at (0,0)
  {\includegraphics[width=0.40\linewidth,trim=43.8 38 15.5 27.2,clip]{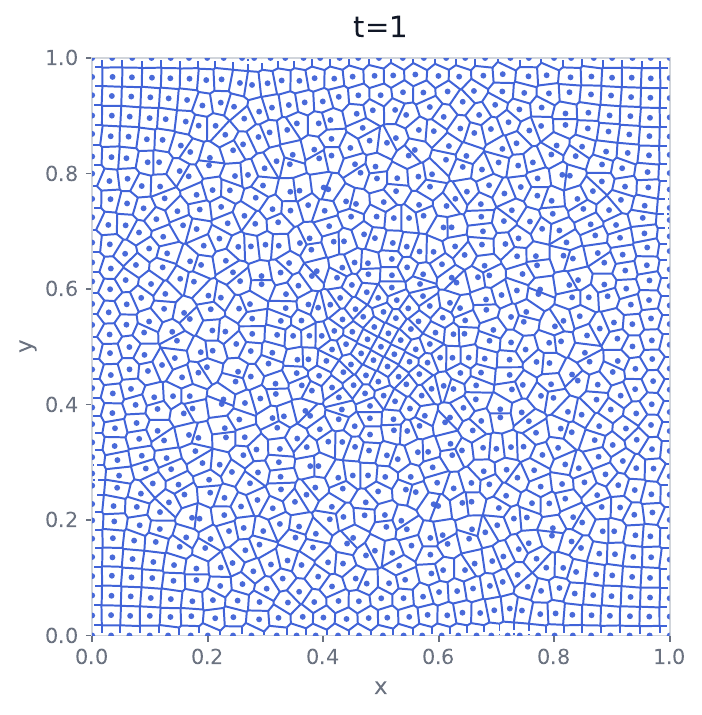}};
\node[inner sep=0pt] (B) at (0.60\linewidth,0)
  {\includegraphics[width=0.40\linewidth,trim=43.8 38 15.5 27.2,clip]{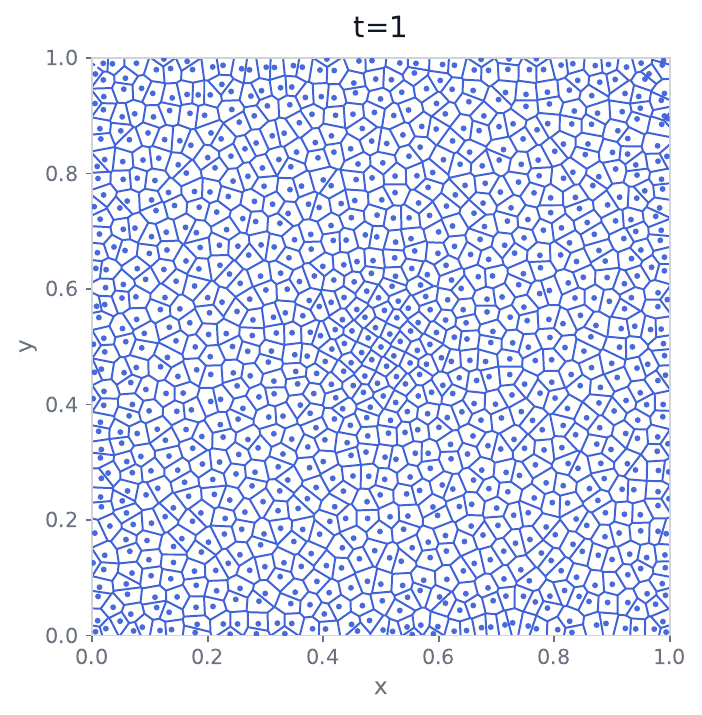}};
\draw[-{Latex[length=2.6mm,width=2mm]},line width=0.9pt]
  ([xshift=3.5mm]A.east) -- ([xshift=-3.5mm]B.west)
  node[midway,above=2pt,font=\small] {Lloyd};
\end{tikzpicture}
\caption{The same computation without and with the Lloyd correction, for a vortex solution of the compressible Euler equations, computed with the method of \cite{despres2024,desprem} on an initially Cartesian mesh, both at time $t=1$.
Without correction (left), particles come together and many cells collapse.
With correction (right), the mesh stays close to a centroidal Vorono\"i tessellation.}
\label{fig:lloyd-summary}
\end{figure}

Let $c_i^{k+1,-}$ be the centroid of $\Omega_i^{k+1,-}$
\[
c_i^{k+1,-}:=\frac{1}{|\Omega_i^{k+1,-}|}\int_{\Omega_i^{k+1,-}} x\,\diff x.
\]
The Lloyd step moves each particle towards the centroid of its cell
\begin{equation}\label{eq:lloyd-step}
x_i^{k+1}=x_i^{k+1,-}+\eta_i^k\left(c_i^{k+1,-}-x_i^{k+1,-}\right),
\qquad 0\le \eta_i^k\le 1.
\end{equation}
The final Vorono\"i tessellation generated by $X^{k+1}$ is $\{\Omega_i^{k+1}\}$.
The final density field is
\[
\rho^{k+1}(x):=\sum_{i=1}^N \rho_i^{k+1}\,\mathbf 1_{\Omega_i^{k+1}}(x), \qquad \rho_i^{k+1}:=\frac{M_i}{|\Omega_i^{k+1}|}>0.
\]

\subsection{Continuous-time viewpoint}\label{sec:continuous}

If the parameters in \eqref{eq:lloyd-step} are scaled as $\eta_i^k=\alpha(t^k)\,\Delta t^k$ with $\alpha\ge 0$
(and $\Delta t^k\to0$), then the split update \eqref{eq:lag-step-0}--\eqref{eq:lloyd-step}
formally corresponds to 
\begin{equation}\label{eq:forced-lloyd-ode}
\dot x_i^h(t)=v_i^h(t)+\alpha(t)\left(c_i(X^h(t))-x_i^h(t)\right),
\qquad 1\le i\le N,
\end{equation}
where $v_i^h$ denotes the discrete velocity produced by the physical finite-volume step and $c_i(X^h)$ denotes the centroid of the Vorono\"i cell generated by $X^h$.
In the idealized case $v_i^h(t)=v(t,x_i^h(t))$, \eqref{eq:forced-lloyd-ode} reduces to the exact-advection model studied in \Secref{sec:quasi-uniformity}.
If $\alpha=0$, one expects, and one indeed observes in computations, that shocks in the velocity field lead to strongly distorted and eventually degenerate Vorono\"i cells. We therefore regularize the physical advection by the Lloyd stabilizer in \eqref{eq:forced-lloyd-ode}.

The smoothing effect of the correction is particularly transparent in one dimension.
Consider $N$ particles $0\le x_1<\dots<x_N<1$ on $\R/\mathbb Z$ with $x_{i+N}:=x_i+1$ for all $i$.
The Vorono\"i cell of $x_i$ is the interval between
$(x_{i-1}+x_i)/2$ and $(x_i+x_{i+1})/2$, so its centroid satisfies
\[
c_i-x_i=\frac14\bigl(x_{i+1}-2x_i+x_{i-1}\bigr).
\]
Thus the Lloyd correction is proportional to a second-order finite difference of the positions, and acts as a discrete diffusion.
Writing $h=1/N$, it is of order $\alpha h^2$ when the positions follow a smooth function of the particle label, and its damping rate on oscillations is parabolic. Indeed, for the Lloyd dynamics alone, a perturbation $x_i=ih+b(t)\cos(\theta i)$ has amplitude satisfying
\(\dot b(t)=-\alpha(t)\sin^2(\theta/2)b(t).
\)
A mode of fixed physical wavelength has $\theta=O(h)$ and decays at rate
$O(\alpha h^2)$, whereas a mode varying on the scale of the spacing decays
at rate $O(\alpha)$.
The correction should therefore be strong enough to damp oscillations between neighboring particles, but weak enough to preserve smooth physical motion. With $\alpha_h=h^{-1}$, for instance, the former decay at rate $O(h^{-1})$ and the latter at rate $O(h)$.

Proving both mesh control and consistency in two dimensions is considerably more involved, and requires further estimates.
The rest of the paper analyzes \eqref{eq:forced-lloyd-ode} and its consequences.

\subsection{Our contributions}\label{sec:contributions}

Our analysis is based on inequalities between three natural indicators of the quality of a Vorono\"i tessellation.
For a configuration $X\in\Omega^N$ with pairwise distinct generators, let $\{\Omega_i(X)\}_{1\le i\le N}$ be the corresponding Vorono\"i tessellation, with
volumes $V_i(X):=|\Omega_i(X)|$ and centroids $c_i(X):=V_i(X)^{-1}\int_{\Omega_i(X)}x\,\diff x$.
The first indicator is the energy
\begin{equation}\label{eq:energy-F}
\mathscr F(X):=\sum_{i=1}^N\int_{\Omega_i(X)}|x-x_i|^2\,\diff x,
\end{equation}
the second indicator is the modified energy
\begin{equation}\label{eq:energy-G}
\mathscr G(X):=\sum_{i=1}^N V_i(X)|x_i-c_i(X)|^2,
\end{equation}
and the third indicator is the characteristic length of the tessellation
\begin{equation}\label{eq:energy-D}
\mathscr D(X):=  \max_{1\leq i \leq N} \operatorname{diam}(\Omega_i(X)).
\end{equation}
Here and below, $h:=\sqrt{|\Omega|/N}$ is the mesh size, so that $h^2$ is the mean cell volume.

We say that the Vorono\"i tessellation remains \emph{quasi-uniform} on $[0,T]$ if $\mathscr D(X(t))\le Ch$ for $t\in[0,T]$, with $C$ independent of $h$.
This only says that no cell is much larger than the mean cell; it is weaker than quasi-uniformity in the finite element sense, which also requires that no cell is much smaller than the mean cell (see Corollary~\ref{cor:mesh-bounds} for the latter).
Several of the difficulties we meet come down to obtaining good bounds on $t\mapsto \mathscr D(X(t))$.

Our results are the following.

\begin{itemize}
\item
In \Secref{sec:convergence} we prove that the approximate density, namely the Vorono\"i histogram generated by \eqref{eq:forced-lloyd-ode}, converges to the solution of the continuity equation \eqref{eq:continuity}, provided the discrete velocities converge to $v$, the cell diameters tend to zero, and the accumulated Lloyd displacement defined in \eqref{eq:lloyd_defect_weighted} tends to zero (Theorem~\ref{thm:W1-convergence}).
\smallskip

\item
In \Secref{sec:quasi-uniformity} we study \eqref{eq:forced-lloyd-ode} with the exact velocity $v\in L^1((0,T);C^{0,1}(\Omega;\R^2))$.
We first show that a feedback choice of $\alpha$ traps the configuration near the minimizers of $\mathscr F$, which yields quasi-uniformity (Theorem~\ref{thm:alpha-feedback}) and quantitative bounds on $B_h(T)$ (Corollary~\ref{cor:mesh-bounds}).
We find it difficult to foresee a numerical scenario which would take advantage of this, and we therefore regard the quasi-uniformity produced by Lloyd's algorithm as a structural rather than a practical feature.
\smallskip

\item In \Secref{subsec:adaptive-energy} we show that any locally Lipschitz feedback with
$0\leq \alpha_h\leq \mathscr G(X^h)/h^{5/2}=O(h^{-1/2})$
provides consistency in Wasserstein distance (Theorem~\ref{thm:adaptive-energy-convergence}), with a rate not worse than $O(h^{1/4})$; even a strong relaxation rate therefore preserves consistency.
This seems to us an important consequence of our work, since it opens the way to using Lloyd's algorithm in numerical fluid solvers without the remeshing usually performed, as for instance in \cite{shasha,loulou}.
\smallskip

\item
In \Secref{sec:osl} we discuss why most of the preceding conclusions hold for one-sided Lipschitz velocities as well.
\smallskip

\item
In \Secref{sec:num} we illustrate the effect of the Lloyd stabilization on a numerical solution of the compressible Euler equations, computed with the method of \cite{despres2024,desprem}. 
\end{itemize}

\section{Convergence of the Vorono\"i histogram}\label{sec:convergence}

Up to \Secref{sec:osl}, the velocity field is $v\in L^1((0,T);C^{0,1}(\Omega;\R^2))$,\footnote{Only the one-sided part of the Lipschitz bound is really used, as explained in \Secref{sec:osl}.} with $v\cdot n=0$ on $\partial\Omega$ when $\Omega$ is a bounded domain. We write
\(
L(t):=\|v(t,\cdot)\|_{C^{0,1}(\Omega)},
\Lambda(t):=\int_0^t L(s)\,\diff s.
\)
We consider the continuity equation
\begin{equation}\label{eq:continuity}
\partial_t\rho+\nabla\cdot(\rho v)=0\qquad\text{in }(0,T)\times\Omega,
\end{equation}
with initial datum $\rho_0\in L^1(\Omega)$, $\rho_0\ge0$.
Since $v$ is Lipschitz in $x$, \eqref{eq:continuity} has a unique weak solution $\rho$, given by $\mu(t):=\rho(t,\cdot)\,\diff x=\Phi(t,\cdot)_\#\mu(0)$, where $\Phi$ is the flow of $v$.
Our goal is to compare $\mu$ with the Vorono\"i histogram generated by \eqref{eq:forced-lloyd-ode}, where $v_i^h$ are the discrete velocities of the scheme.
The comparison is made in the Wasserstein distance \cite{villani}
\[
W_1(\mu,\nu):=\inf_{\pi\in\Pi(\mu,\nu)}\int_{\Omega\times\Omega}|x-y|\,\diff\pi(x,y),
\]
where $\mu$ and $\nu$ are nonnegative measures on $\Omega$ with the same mass and $\Pi(\mu,\nu)$ is the set of couplings of $\mu$ and $\nu$, that is, of nonnegative measures on $\Omega\times\Omega$ with first marginal $\mu$ and second marginal $\nu$.

\begin{theorem}\label{thm:W1-convergence}
Let $X^h(t)=(x_1^h(t),\dots,x_N^h(t))$ be an absolutely continuous solution of \eqref{eq:forced-lloyd-ode} on $[0,T]$, where $\alpha\in L^1(0,T)$ is nonnegative and $v_i^h\in L^1((0,T);\R^2)$.
Assume that the generators remain distinct on $[0,T]$, and let $\{\Omega_i^h(t)\}_{1\le i\le N}$ be the associated Vorono\"i cells.
Let $\mu^h(t):=\rho^h(t,\cdot)\,\diff x$ be the Vorono\"i histogram \eqref{eq:rho-def} with masses $M_i^h:=\int_{\Omega_i^h(0)}\rho_0\,\diff x$, that is
\[
\rho^h(t,x):=\sum_{i=1}^N\frac{M_i^h}{|\Omega_i^h(t)|}\mathbf 1_{\Omega_i^h(t)}(x),
\qquad
M:=\sum_{i=1}^N M_i^h=\int_\Omega \rho_0\,\diff x .
\]
Let $d_h:=\sup_{t\in[0,T]}\mathscr D(X^h(t))$ be the largest cell diameter on $[0,T]$, let $\mathscr E_h(t):=\sum_i M_i^h\,|v_i^h(t)-v(t,x_i^h(t))|$ be the velocity residual, and let
\begin{equation}\label{eq:lloyd_defect_weighted}
B_h(t):=\int_0^t e^{\Lambda(t)-\Lambda(s)}\alpha(s)
\sum_{i=1}^N M_i^h\left|c_i(X^h(s))-x_i^h(s)\right|\,\diff s.
\end{equation}
Then, for every $t\in[0,T]$,
\begin{equation}\label{eq:W1_conv_bound}
W_1\left(\mu^h(t),\mu(t)\right)
\le M\left(1+e^{\Lambda(t)}\right)d_h
+\int_0^t e^{\Lambda(t)-\Lambda(s)}\mathscr E_h(s)\,\diff s
+B_h(t).
\end{equation}
In particular, if
$
d_h\to0$,
$\int_0^T\mathscr E_h(s)\,\diff s\to 0 $ and 
$B_h(T)\to0$, 
then 
$$\sup_{t\in[0,T]}W_1(\mu^h(t),\mu(t))\to 0.$$
\end{theorem}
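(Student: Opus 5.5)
We compare both measures to the weighted empirical measures $\nu^h(t):=\sum_i M_i^h\delta_{x_i^h(t)}$ and $\tilde\nu(t):=\sum_i M_i^h\delta_{\Phi(t,x_i^h(0))}$, and use the triangle inequality
\[
W_1(\mu^h(t),\mu(t))\le W_1(\mu^h(t),\nu^h(t))+W_1(\nu^h(t),\tilde\nu(t))+W_1(\tilde\nu(t),\mu(t)).
\]

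\emph{First term.} Transport the mass $M_i^h$ spread uniformly on $\Omega_i^h(t)$ to the generator $x_i^h(t)\in\overline{\Omega_i^h(t)}$. Each point moves at most $\operatorname{diam}\Omega_i^h(t)\le d_h$, so this term is at most $Md_h$.

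\emph{Third term.} Since $\mu(t)=\Phi(t,\cdot)_\#\mu(0)$ and $\tilde\nu(t)=\Phi(t,\cdot)_\#\nu^h(0)$, any coupling $\pi$ of $\mu(0)$ and $\nu^h(0)$ pushes forward under $\Phi(t,\cdot)\times\Phi(t,\cdot)$ to a coupling of $\mu(t)$ and $\tilde\nu(t)$. By Gronwall, $\Phi(t,\cdot)$ is $e^{\Lambda(t)}$-Lipschitz; on the torus this uses the periodic lift and minimal representatives. Hence $W_1(\tilde\nu(t),\mu(t))\le e^{\Lambda(t)}W_1(\nu^h(0),\mu(0))$. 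Because $M_i^h=\int_{\Omega_i^h(0)}\rho_0$, sending $\rho_0\,\diff x|_{\Omega_i^h(0)}$ to $x_i^h(0)$ is admissible, and it gives $W_1(\mu(0),\nu^h(0))\le Md_h$. This is where $M(1+e^{\Lambda(t)})d_h$ comes from.

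\emph{Second term.} Couple $x_i^h(t)$ with $y_i(t):=\Phi(t,x_i^h(0))$ using weight $M_i^h$. Then $W_1\le\sum_iM_i^h\delta_i(t)$ with $\delta_i:=|x_i^h-y_i|$ and $\delta_i(0)=0$. Both curves are absolutely continuous, and
\[
\dot x_i^h-\dot y_i=\bigl(v_i^h-v(t,x_i^h)\bigr)+\bigl(v(t,x_i^h)-v(t,y_i)\bigr)+\alpha\,(c_i-x_i^h).
\]
Hence, for a.e.\ $t$, $\dot\delta_i\le L(t)\delta_i+|v_i^h-v(t,x_i^h)|+\alpha|c_i-x_i^h|$. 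At points where $\delta_i=0$, use the a.e.\ Lipschitz bound on $|\cdot|$ composed with an AC curve. On the torus, work with lifts in $\R^2$; the lifted distance bounds the torus distance from above, so the estimate still holds.

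Multiply by $M_i^h$, sum over $i$, and apply Gronwall with integrating factor $e^{-\Lambda}$. This gives
\[
\sum_i M_i^h\delta_i(t)\le\int_0^te^{\Lambda(t)-\Lambda(s)}\mathscr E_h(s)\,\diff s+B_h(t),
\]
which is exactly the last two terms of \eqref{eq:W1_conv_bound}.

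The convergence statement follows because $e^{\Lambda(t)-\Lambda(s)}\le e^{\Lambda(T)}$ is bounded, and $B_h(t)\le B_h(T)$ since the integrand is nonnegative and the prefactor $e^{\Lambda(t)}$ is monotone in $t$.

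The main delicate point is the differential inequality for $\delta_i$ under only absolute continuity. This includes the torus case, where $|\cdot|$ is a minimum over lattice translates. I would handle it by working throughout with continuous lifts in $\R^2$, where the flow and the ODE lift globally, and only projecting at the end.
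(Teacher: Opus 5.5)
Your proof is correct and is essentially the paper's argument. Both use the atomic intermediate measure $\nu^h(t)$, the same cell-based couplings at times $0$ and $t$ (which produce the $M(1+e^{\Lambda(t)})d_h$ term), and a Gronwall estimate along trajectories; the only difference is that you insert $\tilde\nu(t)=\Phi(t,\cdot)_\#\nu^h(0)$ and split the paper's single Gronwall bound on $|X^h(t,x)-\Phi(t,y)|$, over arbitrary coupled pairs $(x,y)$, into the $e^{\Lambda(t)}$-Lipschitz bound for $\Phi(t,\cdot)$ plus a same-starting-point trajectory estimate, which gives exactly the same constants (and your monotonicity remark on $B_h$ correctly justifies the \emph{in particular} clause).
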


\begin{proof}
We compare the Vorono\"i histogram with the exact solution through the atomic measure
\[
\nu^h(t):=\sum_{i=1}^N M_i^h\,\delta_{x_i^h(t)} .
\]
Fix $t\in[0,T]$. The measure
\[
\pi_t(\diff x,\diff y):=\sum_{i=1}^N\frac{M_i^h}{|\Omega_i^h(t)|}\mathbf 1_{\Omega_i^h(t)}(x)\,\diff x\otimes\delta_{x_i^h(t)}(\diff y)
\]
has first marginal $\rho^h(t,\cdot)\,\diff x=\mu^h(t)$ and second marginal $\nu^h(t)$, so that $\pi_t\in\Pi(\mu^h(t),\nu^h(t))$.
Since $x_i^h(t)\in\Omega_i^h(t)$, we have $|x-x_i^h(t)|\le\mathscr D(X^h(t))\le d_h$ for $x\in\Omega_i^h(t)$, whence
\begin{equation}\label{eq:mu_to_nu}
\begin{aligned}
W_1\left(\mu^h(t),\nu^h(t)\right)
&\le\int_{\Omega\times\Omega}|x-y|\,\diff\pi_t(x,y)\\
&=\sum_{i=1}^N\frac{M_i^h}{|\Omega_i^h(t)|}\int_{\Omega_i^h(t)}|x-x_i^h(t)|\,\diff x
\le Md_h .
\end{aligned}
\end{equation}
By the triangle inequality,
\begin{equation}\label{eq:triangle_W1}
W_1\left(\mu^h(t),\mu(t)\right)
\le Md_h + W_1\left(\nu^h(t),\mu(t)\right).
\end{equation}
For $x$ in the support of $\nu^h(0)$, let $i(x)\in\{1,\dots,N\}$ be the index with $x=x_{i(x)}^h(0)$, and set
\(
X^h(t,x):=x_{i(x)}^h(t).
\)
Then $X^h(t,\cdot)_\#\nu^h(0)=\nu^h(t)$ and $\Phi(t,\cdot)_\#\mu(0)=\mu(t)$.
Hence, for every $\pi_0\in\Pi(\nu^h(0),\mu(0))$, the push-forward of $\pi_0$ by $(x,y)\mapsto (X^h(t,x),\Phi(t,y))$ belongs to $\Pi(\nu^h(t),\mu(t))$, and therefore
\begin{equation}\label{eq:W1_coupling_bound}
W_1\left(\nu^h(t),\mu(t)\right)\le \int_{\Omega\times\Omega} \left|X^h(t,x)-\Phi(t,y)\right|\,\diff\pi_0(x,y).
\end{equation}
Fix $(x,y)$ in the support of $\pi_0$.
(On the torus, we work with continuous representatives in $\R^2$ of $X^h(t,x)$ and $\Phi(t,y)$, chosen so that their Euclidean distance at $t=0$ equals $|x-y|$; the Euclidean distance between the representatives, which is what we estimate below, bounds the distance on the torus from above, which is all we need in \eqref{eq:W1_coupling_bound}.)
By \eqref{eq:forced-lloyd-ode} and $\partial_t\Phi(t,y)=v(t,\Phi(t,y))$, for a.e.\ $t\in[0,T]$,
\begin{align*}
\frac{\diff}{\diff t}\left(X^h(t,x)-\Phi(t,y)\right)
&=v_{i(x)}^h(t)-v(t,x_{i(x)}^h(t))
+v(t,x_{i(x)}^h(t))-v(t,\Phi(t,y))\\
&\quad+\alpha(t)\left(c_{i(x)}(X^h(t))-x_{i(x)}^h(t)\right).
\end{align*}
Since $v(t)$ is Lipschitz, we deduce that
\begin{align*}
\frac{\diff}{\diff t}\left|X^h(t,x)-\Phi(t,y)\right|
&\le L(t)\left|X^h(t,x)-\Phi(t,y)\right|
+\left|v_{i(x)}^h(t)-v(t,x_{i(x)}^h(t))\right|\\
&\quad+\alpha(t)\left|c_{i(x)}(X^h(t))-x_{i(x)}^h(t)\right|
\end{align*}
for a.e.\ $t\in[0,T]$. Gronwall's lemma yields
\begin{align}
\left|X^h(t,x)-\Phi(t,y)\right|
&\le e^{\Lambda(t)}|x-y|
+\int_0^t e^{\Lambda(t)-\Lambda(s)}
\left|v_{i(x)}^h(s)-v(s,x_{i(x)}^h(s))\right|\,\diff s \notag\\
&\quad+\int_0^t e^{\Lambda(t)-\Lambda(s)}\alpha(s)
\left|c_{i(x)}(X^h(s))-x_{i(x)}^h(s)\right|\,\diff s. \label{eq:gronwall_traj}
\end{align}
We integrate \eqref{eq:gronwall_traj} with respect to $\pi_0$.
Since the first marginal of $\pi_0$ is $\nu^h(0)=\sum_{i=1}^N M_i^h\delta_{x_i^h(0)}$, and since the last two terms in \eqref{eq:gronwall_traj} depend on $(x,y)$ only through $i(x)$, we have
\[
\int_{\Omega\times\Omega}\left|v_{i(x)}^h(s)-v(s,x_{i(x)}^h(s))\right|\diff\pi_0(x,y)
=\sum_{i=1}^NM_i^h\left|v_i^h(s)-v(s,x_i^h(s))\right|
=\mathscr E_h(s),
\]
and similarly the integral of the third term equals $B_h(t)$ by \eqref{eq:lloyd_defect_weighted}.
Combining with \eqref{eq:W1_coupling_bound} and taking the infimum over $\pi_0\in\Pi(\nu^h(0),\mu(0))$, we get
\begin{equation}\label{eq:W1_nu_mu_opt}
W_1\left(\nu^h(t),\mu(t)\right)
\le e^{\Lambda(t)}W_1\left(\nu^h(0),\mu(0)\right)
+\int_0^t e^{\Lambda(t)-\Lambda(s)}\mathscr E_h(s)\,\diff s
+B_h(t).
\end{equation}
It remains to estimate $W_1(\nu^h(0),\mu(0))$.
Since $M_i^h=\int_{\Omega_i^h(0)}\rho_0\,\diff y$, the measure
\[
\pi(\diff x,\diff y):=\sum_{i=1}^N\delta_{x_i^h(0)}(\diff x)\otimes\rho_0(y)\mathbf 1_{\Omega_i^h(0)}(y)\,\diff y
\]
has first marginal $\nu^h(0)$ and second marginal $\rho_0\,\diff y=\mu(0)$.
Since $|x_i^h(0)-y|\le d_h$ for $y\in\Omega_i^h(0)$, we get
\begin{equation}\label{eq:initial_coupling}
\begin{aligned}
W_1\left(\nu^h(0),\mu(0)\right)
&\le\int_{\Omega\times\Omega}|x-y|\,\diff\pi(x,y)\\
&=\sum_{i=1}^N\int_{\Omega_i^h(0)}\left|x_i^h(0)-y\right|\rho_0(y)\,\diff y
\le M d_h .
\end{aligned}
\end{equation}
Plugging \eqref{eq:initial_coupling} into \eqref{eq:W1_nu_mu_opt}, and then \eqref{eq:W1_nu_mu_opt} into \eqref{eq:triangle_W1}, we obtain \eqref{eq:W1_conv_bound}.
\end{proof}

We end this section with the following consistency result.

\begin{corollary}\label{cor:weak-consistency}
Under the assumptions of Theorem~\ref{thm:W1-convergence}, for every
$\varphi\in C_c^\infty([0,T)\times\Omega)$ one has
\begin{align*}
&\left|
\int_0^T\!\!\int_\Omega \rho^h(t,x)\left(\partial_t\varphi(t,x)
+v(t,x)\cdot\nabla\varphi(t,x)\right)\,\diff x\,\diff t
+\int_\Omega \rho^h(0,x)\varphi(0,x)\,\diff x
\right|\\
&\qquad\le C\left(
W_1(\mu^h(0),\mu(0)) + \sup_{t\in[0,T]}W_1(\mu^h(t),\mu(t))
\right),
\end{align*}
for some $C>0$ depending only on $\varphi, T, \Omega, v$.

In particular, if $W_1(\mu^h(0),\mu(0))+\sup_{t\in[0,T]}W_1(\mu^h(t),\mu(t))\to 0$, then $\rho^h$ is consistent, in the weak sense, with the continuity equation \eqref{eq:continuity}.
\end{corollary}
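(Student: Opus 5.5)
The plan is to subtract the weak formulation satisfied by the exact solution and to control each resulting difference with the Kantorovich--Rubinstein duality for $W_1$. Since $\mu(t)=\Phi(t,\cdot)_\#\mu(0)$ with $\Phi$ the flow of $v$, the exact solution satisfies, for every $\varphi\in C_c^\infty([0,T)\times\Omega)$,
\[
\int_0^T\!\!\int_\Omega \rho\left(\partial_t\varphi+v\cdot\nabla\varphi\right)\diff x\,\diff t+\int_\Omega\rho_0\varphi(0,\cdot)\,\diff x=0 .
\]
To check this, I would differentiate $t\mapsto\int\varphi(t,\Phi(t,y))\rho_0(y)\,\diff y$, which is absolutely continuous because $v\in L^1((0,T);C^{0,1})$, and integrate over $[0,T]$, using that $\varphi(T,\cdot)=0$. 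The boundary condition $v\cdot n=0$ on a bounded domain keeps the flow inside $\Omega$; on the torus nothing needs to be checked. Subtracting this identity from the expression in the statement gives the sum of two terms:
\[
I_1:=\int_0^T\Bigl(\int\psi_t\,\diff\mu^h(t)-\int\psi_t\,\diff\mu(t)\Bigr)\diff t,\qquad
I_2:=\int\varphi(0,\cdot)\,\diff\mu^h(0)-\int\varphi(0,\cdot)\,\diff\mu(0),
\]
where $\psi_t:=\partial_t\varphi(t,\cdot)+v(t,\cdot)\cdot\nabla\varphi(t,\cdot)$.

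The measures being compared have the same total mass $M$. For $\mu^h(t)$ this holds because $\int\rho^h(t)=\sum_i M_i^h=M$. Duality therefore gives $\bigl|\int f\,\diff(\mu^h-\mu)\bigr|\le \mathrm{Lip}(f)\,W_1$ for any Lipschitz $f$. Applied to $f=\varphi(0,\cdot)$, this yields $|I_2|\le\|\nabla\varphi\|_\infty W_1(\mu^h(0),\mu(0))$. For $I_1$ I need a Lipschitz bound on $\psi_t$, which I would take as
\[
\mathrm{Lip}(\psi_t)\le\|\nabla\partial_t\varphi\|_\infty+L(t)\|\nabla\varphi\|_\infty+\|v(t)\|_\infty\|\nabla^2\varphi\|_\infty .
\]
On a bounded $\Omega$, $\|v(t)\|_\infty\le L(t)$ if the $C^{0,1}$ norm includes the sup norm, so $\mathrm{Lip}(\psi_t)\le C_\varphi(1+L(t))$. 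Integrating in time,
\[
|I_1|\le C_\varphi\bigl(T+\Lambda(T)\bigr)\sup_{t}W_1(\mu^h(t),\mu(t)),
\]
and this constant depends only on $\varphi,T,v$, as the statement requires.

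The main technical point is this last step: the duality requires a globally Lipschitz test function on $\Omega$. On a bounded convex domain, $\psi_t$ is Lipschitz on $\Omega$ with the bound above. On the torus, $\varphi$ and $v$ are periodic, so $\psi_t$ is Lipschitz for the torus distance. I also need to make sure that $t\mapsto\int\psi_t\,\diff\mu^h(t)$ is measurable. This follows from the continuity in $t$ of the Voronoï cells, which holds because the generators are absolutely continuous and remain distinct, combined with the fact that $\psi$ is $L^1$ in $t$ and bounded in $x$.

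The final claim then follows at once: if the $W_1$ quantities tend to zero, the residual of the weak formulation tends to zero for every test function, which is weak consistency.
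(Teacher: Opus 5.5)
Your proposal is correct and follows the paper's proof. You subtract the weak formulation satisfied by the exact solution, then bound the two differences by Kantorovich--Rubinstein duality, using $f=\partial_t\varphi+v\cdot\nabla\varphi$ at each time (integrated over $[0,T]$) and $f=\varphi(0,\cdot)$. Your additional details make explicit what the paper leaves implicit: deriving the weak formulation from the flow, the bound $\mathrm{Lip}(\psi_t)\le C_\varphi(1+L(t))$ that yields the constant $C_\varphi(T+\Lambda(T))$, and the measurability remark.
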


\begin{proof}
Since $\rho$ solves \eqref{eq:continuity}, we find
\begin{align*}
&\left|
\int_0^T\!\!\int_\Omega \rho^h\left(\partial_t\varphi+v\cdot\nabla\varphi\right)\,\diff x\,\diff t
+\int_\Omega \rho^h(0,x)\varphi(0,x)\,\diff x
\right|\\
&\qquad\le
\int_0^T\left|
\int_\Omega \left(\partial_t\varphi(t,x)+v(t,x)\cdot\nabla\varphi(t,x)\right)\,
\diff(\mu^h(t)-\mu(t))(x)
\right|\diff t\\
&\qquad\qquad+
\left|
\int_\Omega \varphi(0,x)\,\diff(\mu^h(0)-\mu(0))(x)
\right|.
\end{align*}
Because $\mu^h(t)$ and $\mu(t)$ have the same mass, Kantorovich--Rubinstein duality gives
\[
\left|\int_\Omega f(x)\,\diff(\mu^h(t)-\mu(t))(x)\right|
\le \|\nabla f\|_{L^\infty(\Omega)}\,W_1(\mu^h(t),\mu(t))
\]
for every Lipschitz function $f$.
Applying this with $f=\partial_t\varphi(t,\cdot)+v(t,\cdot)\cdot\nabla\varphi(t,\cdot)$ for each $t$, then integrating in time, and with
$f=\varphi(0,\cdot)$ gives the result.
\end{proof}

\section{Persistence of quasi-uniformity}\label{sec:quasi-uniformity}

In this section we study \eqref{eq:forced-lloyd-ode} with the exact velocity, that is
\begin{equation}\label{eq:exact-ode}
\dot x_i(t)=v(t,x_i(t))+\alpha(t)\left(c_i(X(t))-x_i(t)\right),
\qquad 1\le i\le N .
\end{equation}
The reason is the following.
By Theorem~\ref{thm:W1-convergence}, once the velocity residual $\mathscr E_h$ is small, convergence reduces to two geometric questions, namely that the cell diameters tend to zero and that $B_h(T)$ vanishes.
Equation \eqref{eq:exact-ode} isolates these questions from the approximation of the velocity.

We first construct a \emph{feedback} $\alpha$ which keeps the configuration near the set of minimizers of $\mathscr F$, hence quasi-uniform (Theorem~\ref{thm:alpha-feedback}), and we derive quantitative bounds from it (Corollary~\ref{cor:mesh-bounds}).
We then explain why this is not enough for convergence, and construct in \Secref{subsec:adaptive-energy} an adaptive feedback which gives all the conditions of Theorem~\ref{thm:W1-convergence} (Theorem~\ref{thm:adaptive-energy-convergence}).

Since 
$$
\mathscr F(X)=\int_\Omega\min_{1\le i\le N}|x-x_i|^2\,\diff x
$$ 
(see \Appref{sec:appendix-envelope}), the functional $\mathscr F$ extends continuously to the compact set $\overline\Omega^N$, and has minimizers there.
The global minimization of $\mathscr F$ belongs to the classical theory of optimal quantization and centroidal Vorono\"i tessellations \cite{DuFaberGunzburger1999,GrafLuschgy2000,caglioti2018quantization}.
We do not address the existence of minimizers in the open set $\Omega^N$, nor the geometry of the minimizing set.
We simply assume that the minimizing set is nonempty and compact in $\Omega^N$.

\begin{theorem}\label{thm:alpha-feedback}
Let
\[
\mathscr F^\ast:=\inf_{\Omega^N}\mathscr F,
\qquad
\mathcal M:=\{X\in\Omega^N:\ \mathscr F(X)=\mathscr F^\ast\},
\]
and assume that $\mathcal M$ is nonempty and compact.
Fix $r>0$ and set $\mathcal U_r:=\{X\in\Omega^N:\ \dist(X,\mathcal M)<r\}$, where $r$ is small enough that every $X\in\overline{\mathcal U_r}$ has pairwise distinct generators, and that $\overline{\mathcal U_r}\subset\Omega^N$ when $\Omega$ is a bounded domain.
Assume that $\mathcal U_r$ is quasi-uniform, in the sense that there is $C_1>0$ such that
\begin{equation}\label{eq:H}\tag{H}
\mathscr D(X)\le C_1h
\qquad\text{for every }X\in\mathcal U_r .
\end{equation}
Let $v\in L^1((0,T);C^{0,1}(\Omega;\R^2))$, with $v\cdot n=0$ on $\partial\Omega$ when $\Omega$ is a bounded domain, and set
\[
\delta_r:=\inf\{\mathscr F(X)-\mathscr F^\ast:\ X\in\overline{\mathcal U_r},\ \dist(X,\mathcal M)=r\},
\]
which is positive by compactness.

If $X^0\in\mathcal U_r$ satisfies $\mathscr F(X^0)-\mathscr F^\ast<\delta_r$, then there is a nonnegative $\alpha\in L^1(0,T)$ such that \eqref{eq:exact-ode} with $X(0)=X^0$ has a unique absolutely continuous solution on $[0,T]$, and $X(t)\in\mathcal U_r$ for every $t\in[0,T]$.
In particular $\mathscr D(X(t))\le C_1h$ on $[0,T]$.
\end{theorem}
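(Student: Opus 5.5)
The plan is a Lyapunov argument for $\mathscr F$ along \eqref{eq:exact-ode}, with $\alpha$ chosen as a state feedback large enough that $\mathscr F$ never reaches the level $\mathscr F^\ast+\delta_r$. Since $\mathscr F$ cannot reach that level, the trajectory can never touch the sphere $\dist(X,\mathcal M)=r$, and so it stays in $\mathcal U_r$.

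\textbf{Step 1: gradient of $\mathscr F$.} Using the envelope representation $\mathscr F(X)=\int_\Omega\min_i|x-x_i|^2\,\diff x$ (\Appref{sec:appendix-envelope}), I would take as known the classical formula
\[
\nabla_{x_i}\mathscr F(X)=2V_i(X)\,(x_i-c_i(X)),
\]
valid for pairwise distinct generators. I would also use that $X\mapsto c_i(X)$ and $X\mapsto V_i(X)$ are locally Lipschitz on configurations with distinct generators lying in $\Omega^N$. On the compact set $\overline{\mathcal U_r}$ this makes the right-hand side of \eqref{eq:exact-ode} Carathéodory, locally Lipschitz in $X$ with an $L^1$-in-time modulus, so local existence and uniqueness follow. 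Along a solution we then get
\[
\frac{\diff}{\diff t}\mathscr F(X)=2\sum_i V_i(x_i-c_i)\cdot v(t,x_i)\;-\;2\alpha(t)\,\mathscr G(X).
\]

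\textbf{Step 2: bounding the transport term.} I would compare $v(t,x_i)$ with $v(t,x)$ on $\Omega_i$. Because $v\cdot n=0$, the field $v$ is tangent to the boundary (on the torus there is no boundary), so the flow preserves $\Omega$ and Lebesgue measure changes only through $\nabla\cdot v$. A direct estimate is enough, though:
\[
\Bigl|\sum_iV_i(x_i-c_i)\cdot v(t,x_i)\Bigr|\le \|v(t)\|_{L^\infty}\sum_iV_i|x_i-c_i|\le \|v(t)\|_{L^\infty}|\Omega|^{1/2}\mathscr G^{1/2}.
\]
Writing $\beta(t):=\|v(t)\|_{L^\infty}|\Omega|^{1/2}\in L^1$, this gives $\dot{\mathscr F}\le 2\beta\mathscr G^{1/2}-2\alpha\mathscr G$.

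\textbf{Step 3: choice of $\alpha$.} I would take the feedback
\[
\alpha(t)=\beta(t)\,\mathscr G(X(t))^{-1/2}\,\chi\bigl(\mathscr F(X(t))\bigr),
\]
where $\chi$ is a Lipschitz cutoff equal to $0$ below $\mathscr F^\ast+\delta_r/2$ (or below a level between $\mathscr F(X^0)-\mathscr F^\ast$ and $\delta_r$) and equal to $1$ near $\mathscr F^\ast+\delta_r$. With this choice $\dot{\mathscr F}\le0$ wherever $\chi=1$.

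\textbf{Main obstacle: $\mathscr G$ may vanish.} The difficulty is that $\mathscr G=0$ at centroidal configurations, including critical points of $\mathscr F$ that are not minimizers, so $\mathscr G^{-1/2}$ can blow up. On the compact shell
\[
S=\{X\in\overline{\mathcal U_r}:\ \mathscr F^\ast+\delta_r/2\le\mathscr F\le\mathscr F^\ast+\delta_r\}
\]
the function $\mathscr G$ could vanish at a non-minimizing critical point. To remove this I would use a second, simpler feedback: take $\alpha=0$ and control $\mathscr F$ crudely via $|\dot{\mathscr F}|\le2\beta\mathscr G^{1/2}$, combined with $\mathscr G\le\mathscr F$, which holds because the centroid minimizes $\int_{\Omega_i}|x-y|^2\,\diff x$ over $y$. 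That alone does not prevent growth, however.

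The robust fix is a linear feedback $\alpha=K\beta(t)\chi(\mathscr F)$ with a constant $K$, together with an inequality $\beta\mathscr G^{1/2}\le K\beta\mathscr G+\epsilon$. An $\epsilon$ term is fine only if it is absorbed into a margin. Concretely, I would use the margin $\delta_r-(\mathscr F(X^0)-\mathscr F^\ast)>0$. Young's inequality gives $\beta\mathscr G^{1/2}\le K\beta\mathscr G+\beta/(4K)$, so the total possible increase of $\mathscr F$ is at most $\|\beta\|_{L^1}/(2K)$, which is smaller than the margin once $K$ is large. Hence $\mathscr F(X(t))<\mathscr F^\ast+\delta_r$ for all $t$ without needing $\mathscr G>0$, and the cutoff $\chi$ is then unnecessary.

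\textbf{Step 4: continuation argument.} Let $t^\ast$ be the first time with $\dist(X,\mathcal M)=r$. At $t^\ast$ we would have $\mathscr F\ge\mathscr F^\ast+\delta_r$, which contradicts Step 3. Hence $X(t)$ stays in $\mathcal U_r$, which is compactly contained in the set of distinct generators in $\Omega^N$, and the local solution extends to all of $[0,T]$. Finally, $\alpha=K\beta\in L^1$ and is nonnegative, and \eqref{eq:H} gives $\mathscr D(X(t))\le C_1h$.
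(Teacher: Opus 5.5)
Your final argument, once you discard the $\mathscr G^{-1/2}$ feedback and the cutoff $\chi$, is correct. It has the same skeleton as the paper's proof:
\begin{itemize}
\item the envelope formula $\nabla_{x_i}\mathscr F=2V_i(x_i-c_i)$;
\item the Cauchy--Schwarz bound $|\mathscr T_v|\le2\beta\sqrt{\mathscr G}$, with $\beta=\sqrt{|\Omega|}\,\|v(t,\cdot)\|_{L^\infty}$;
\item absorption of this term by $2\alpha\mathscr G$, up to an error that is small in $L^1(0,T)$;
\item trapping of $\mathscr F$ strictly below $\mathscr F^\ast+\delta_r$, followed by a first-exit and continuation argument.
\end{itemize}

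The only real difference is the choice of $\alpha$. The paper uses the state feedback $\alpha_\varepsilon(t,X)=\beta(t)/\sqrt{\mathscr G(X)+\varepsilon^2}$ together with $\sqrt{\mathscr G}-\mathscr G/\sqrt{\mathscr G+\varepsilon^2}\le\varepsilon$. The $\varepsilon^2$ there is exactly its way around the vanishing of $\mathscr G$ that you flagged as the main obstacle.

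Your open-loop gain $\alpha=K\beta(t)$ with Young's inequality performs the same absorption, with $\varepsilon$ of order $1/K$. Indeed $\alpha_\varepsilon\le\beta/\varepsilon$, so both choices give $\int_0^T\alpha\lesssim\|\beta\|_{L^1}/\varepsilon$.

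Your version is marginally simpler, because $\alpha$ is fixed before the ODE is solved. Existence and uniqueness for \eqref{eq:exact-ode} then follow directly from the local Lipschitz continuity of the centroids. You do not need to solve the autonomous feedback system first and then re-identify its solution as the solution of \eqref{eq:exact-ode} with $\alpha(t):=\alpha_\varepsilon(t,X(t))$.

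The paper's proof also derives the face bound $|\mathscr T_v|\le C_2L(t)h^2$ from \eqref{eq:H}. That bound is not needed for this theorem; it is only used later, in Corollary~\ref{cor:mesh-bounds}.
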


\begin{proof}
Let $\varepsilon>0$ be such that
\begin{equation}\label{eq:Ebar}
\overline E:=
\mathscr F(X^0)-\mathscr F^\ast
+2\sqrt{|\Omega|}\,\varepsilon\int_0^T\|v(t,\cdot)\|_{L^\infty}\,\diff t
<\delta_r,
\end{equation}
and consider the system
\begin{equation}\label{eq:feedback-ode}
\dot x_i(t)=v(t,x_i(t))+\alpha_\varepsilon(t,X(t))\left(c_i(X(t))-x_i(t)\right),
\qquad 1\le i\le N,
\end{equation}
with $X(0)=X^0$, where
\[
\alpha_\varepsilon(t,X):=
\frac{\sqrt{|\Omega|}\,\|v(t,\cdot)\|_{L^\infty}}
{\sqrt{\mathscr G(X)+\varepsilon^2}} .
\]
By Proposition~\ref{prop:env-gradient} and Remark~\ref{rem:lipschitz}, the centroids $c_i$ and the energy $\mathscr G$ are locally Lipschitz functions of $X$ on the open set of configurations with pairwise distinct generators.
The right-hand side of \eqref{eq:feedback-ode} is therefore locally Lipschitz in $X$ and integrable in $t$, so \eqref{eq:feedback-ode} has a unique absolutely continuous solution $X$ on a maximal interval $[0,T_\ast)$, with $T_\ast\le T$; if $T_\ast<T$, then $X(t)$ leaves every compact subset of the set of configurations with pairwise distinct generators, hence every compact subset of $\mathcal U_r$, as $t\to T_\ast$.
Since $X^0\in\mathcal U_r$ and $\mathcal U_r$ is open,
\[
t_\ast:=\sup\left\{t\in[0,T_\ast):\ X(s)\in\mathcal U_r\ \text{for all }s\in[0,t]\right\}>0 .
\]
We first compute the energy along $X$ on $[0,t_\ast)$.
By Proposition~\ref{prop:env-gradient}, $\mathscr F$ is $C^1$ on the set of configurations with pairwise distinct generators, with 
$$
\nabla_{x_i}\mathscr F(X)=2V_i(X)(x_i-c_i(X)).
$$
By the chain rule and \eqref{eq:feedback-ode}, for a.e.\ $t\in[0,t_\ast)$,
\begin{equation}\label{eq:dFdt}
\frac{\diff}{\diff t}\mathscr F(X(t))
=\mathscr T_v(t)-2\alpha_\varepsilon(t,X(t))\,\mathscr G(X(t)),
\end{equation}
where
\[
\mathscr T_v(t):=2\sum_{i=1}^N V_i(X(t))\left(x_i(t)-c_i(X(t))\right)\cdot v(t,x_i(t)).
\]
We estimate $\mathscr T_v$ in two ways. First, by the Cauchy--Schwarz inequality and $\sum_iV_i=|\Omega|$,
\begin{equation}\label{eq:T_bound_CS}
|\mathscr T_v(t)|\le2\|v(t,\cdot)\|_{L^\infty}\sum_{i=1}^N V_i^{1/2}\cdot V_i^{1/2}|x_i-c_i|\le 2\sqrt{|\Omega|}\,\|v(t,\cdot)\|_{L^\infty}\sqrt{\mathscr G(X(t))},
\end{equation}
where $V_i=V_i(X(t))$, $x_i=x_i(t)$ and $c_i=c_i(X(t))$. Second, we use \eqref{eq:H}, which applies because $X(t)\in\mathcal U_r$ for $t\in[0,t_\ast)$.
By the divergence theorem,
\[
V_i(X)\left(x_i-c_i(X)\right)=\int_{\Omega_i(X)} \left(x_i-x\right)\,\diff x
=-\frac12\int_{\partial\Omega_i(X)}|x-x_i|^2\,n_i(x)\,\diff s,
\]
where $n_i$ is the outer normal to $\Omega_i(X)$, and therefore
\begin{equation}\label{eq:Tv_faces}
\mathscr T_v(t)=-\sum_{i=1}^N\int_{\partial\Omega_i(X(t))}|x-x_i(t)|^2\,v(t,x_i(t))\cdot n_i(x)\,\diff s .
\end{equation}
Let $\Gamma_{ij}(t):=\partial\Omega_i(X(t))\cap\partial\Omega_j(X(t))$ be the face between two neighboring cells, oriented by the normal $n_{ij}$ from $\Omega_i(X(t))$ to $\Omega_j(X(t))$, and let $\Gamma_i^\partial(t):=\partial\Omega_i(X(t))\cap\partial\Omega$.
On the torus, $\Gamma_i^\partial(t)$ is empty, the cells are the convex polygons $P_i(t)$ of \Secref{subsec:adaptive-energy}, $\Gamma_{ij}(t)$ is the union of the faces between $P_i(t)$ and the translates of $P_j(t)$, and the faces between $P_i(t)$ and its own translates cancel in \eqref{eq:Tv_faces}, since they come in pairs with opposite normals.
On $\Gamma_{ij}(t)$ we have $n_i=-n_j=n_{ij}$ and $|x-x_i(t)|=|x-x_j(t)|$, and on $\Gamma_i^\partial(t)$ we have $v(t,x)\cdot n(x)=0$.
Hence \eqref{eq:Tv_faces} can be rewritten as
\begin{equation}\label{eq:Tv_faces2}
\begin{aligned}
\mathscr T_v(t)
&=-\sum_{i<j}\int_{\Gamma_{ij}(t)} |x-x_i(t)|^2\left(v(t,x_i(t))-v(t,x_j(t))\right)\cdot n_{ij}(x)\,\diff s\\
&\quad-\sum_{i=1}^N\int_{\Gamma_i^\partial(t)} |x-x_i(t)|^2\left(v(t,x_i(t))-v(t,x)\right)\cdot n(x)\,\diff s .
\end{aligned}
\end{equation}
Now let $x\in\Gamma_{ij}(t)$. Since $x$ belongs to the closures of both $\Omega_i(X(t))$ and $\Omega_j(X(t))$, \eqref{eq:H} gives
\[
|x-x_i(t)|\le C_1h,
\qquad
|x_i(t)-x_j(t)|\le|x_i(t)-x|+|x-x_j(t)|\le2C_1h,
\]
and similarly $|x-x_i(t)|\le C_1h$ for $x\in\Gamma_i^\partial(t)$.
Since $v(t,\cdot)$ is $L(t)$-Lipschitz, we deduce that
\begin{equation}\label{eq:v_faces}
\begin{aligned}
\left|v(t,x_i(t))-v(t,x_j(t))\right|&\le 2C_1L(t)h
&&\text{on }\Gamma_{ij}(t),\\
\left|v(t,x_i(t))-v(t,x)\right|&\le C_1L(t)h
&&\text{on }\Gamma_i^\partial(t).
\end{aligned}
\end{equation}
The faces $\Gamma_{ij}(t)$, $j\ne i$, lie on the boundary of the convex Vorono\"i polygon of $x_i(t)$ in $\R^2$, and inside the ball $B(x_i(t),C_1h)$; their total length is thus at most the perimeter of this ball, namely $2\pi C_1h$.
Moreover $\sum_i|\Gamma_i^\partial(t)|\le|\partial\Omega|$.
Plugging \eqref{eq:v_faces} into \eqref{eq:Tv_faces2} and using $Nh^2=|\Omega|$ and $h\le\sqrt{|\Omega|}$, we find
\begin{equation}\label{eq:T_bound_h2}
\begin{aligned}
|\mathscr T_v(t)|
&\le N\cdot 2\pi C_1h\cdot(C_1h)^2\cdot 2C_1L(t)h
+|\partial\Omega|\cdot(C_1h)^2\cdot C_1L(t)h\\
&\le C_2L(t)h^2,
\end{aligned}
\end{equation}
where $C_2:=4\pi C_1^4|\Omega|+C_1^3|\partial\Omega|\sqrt{|\Omega|}$.

We now show that $t_\ast=T_\ast=T$.
By \eqref{eq:dFdt}, \eqref{eq:T_bound_CS} and the definition of $\alpha_\varepsilon$, for a.e.\ $t\in[0,t_\ast)$,
\begin{equation}\label{eq:dFdt_eps}
\begin{aligned}
\frac{\diff}{\diff t}\mathscr F(X(t))
&\le 2\sqrt{|\Omega|}\,\|v(t,\cdot)\|_{L^\infty}
\left(\sqrt{\mathscr G(X(t))}-\frac{\mathscr G(X(t))}{\sqrt{\mathscr G(X(t))+\varepsilon^2}}\right)\\
&\le 2\sqrt{|\Omega|}\,\varepsilon\,\|v(t,\cdot)\|_{L^\infty}.
\end{aligned}
\end{equation}
Integrating \eqref{eq:dFdt_eps} on $[0,t]$ and using \eqref{eq:Ebar}, we get
\begin{equation}\label{eq:F_trapped}
\mathscr F(X(t))-\mathscr F^\ast\le\overline E<\delta_r
\qquad\text{for all }t\in[0,t_\ast).
\end{equation}
Suppose that $t_\ast<T_\ast$. Then $\dist(X(t_\ast),\mathcal M)=r$, and the definition of $\delta_r$ gives $\mathscr F(X(t_\ast))-\mathscr F^\ast\ge\delta_r$.
On the other hand, letting $t\to t_\ast$ in \eqref{eq:F_trapped} and using the continuity of $\mathscr F$, we get $\mathscr F(X(t_\ast))-\mathscr F^\ast\le\overline E<\delta_r$, a contradiction.
Hence $t_\ast=T_\ast$, and \eqref{eq:F_trapped} holds on $[0,T_\ast)$.
Consider now the set
\[
K:=\left\{X\in\overline{\mathcal U_r}:\ \mathscr F(X)-\mathscr F^\ast\le\overline E\right\}.
\]
It is compact, and it is contained in $\mathcal U_r$, because $\mathscr F-\mathscr F^\ast\ge\delta_r>\overline E$ on $\overline{\mathcal U_r}\setminus\mathcal U_r$.
By \eqref{eq:F_trapped}, $X(t)\in K$ for all $t\in[0,T_\ast)$, so $X$ does not leave every compact subset of $\mathcal U_r$, and thus $T_\ast=T$.
Since $|\dot X|$ is integrable on $[0,T)$, $X$ extends continuously to $[0,T]$, with $X(T)\in K$ because $K$ is closed.

Finally, set $\alpha(t):=\alpha_\varepsilon(t,X(t))$ for $t\in[0,T]$. Then $\alpha\ge0$,
\[
\int_0^T\alpha(t)\,\diff t
\le\frac{\sqrt{|\Omega|}}{\varepsilon}
\int_0^T\|v(t,\cdot)\|_{L^\infty}\,\diff t<\infty,
\]
and $X$ is the unique absolutely continuous solution of \eqref{eq:exact-ode} with this $\alpha$ and $X(0)=X^0$, again because the right-hand side of \eqref{eq:exact-ode} is locally Lipschitz in $X$ and integrable in $t$.
Thus $X(t)\in\mathcal U_r$ for all $t\in[0,T]$.
\end{proof}

The hypothesis \eqref{eq:H} holds for $r=O(h)$.
Every $Y\in\mathcal M$ satisfies $\sup_{x\in\Omega}\min_i|x-y_i|\le Ch$ with $C$ independent of $h$, by \cite[Proposition~3.1]{GrafLuschgyPages2012} for a bounded convex domain and by \cite[Theorem~1(ii)]{Gruber2004} for a flat torus.
Moving each generator by at most $r$ increases this covering radius by at most $r$, and a cell has diameter at most twice the covering radius, so $\mathscr D(X)\le2(Ch+r)$ for $X\in\mathcal U_r$.

We now make Theorem~\ref{thm:alpha-feedback} quantitative, in view of Theorem~\ref{thm:W1-convergence}.

\begin{corollary}\label{cor:mesh-bounds}
Under the assumptions of Theorem~\ref{thm:alpha-feedback}, with the constant $C_1$ in \eqref{eq:H} independent of $h$, let $X^h$ and $\alpha_h$ be the solutions and coefficients given by Theorem~\ref{thm:alpha-feedback} for radii $r_h$, and set $A_h:=\int_0^T\alpha_h(t)\,\diff t$.
There exists a constant $C_3>0$, depending only on $C_1$ and $\Omega$, with the following property.
If $r_h\le C_3h$, then the following hold for every $t\in[0,T]$.
\begin{enumerate}[label=\textup{(\roman*)}]
\item the generators stay separated, with $|x_i^h(t)-x_j^h(t)|\ge2C_3h$ for $i\ne j$;
\item the cell volumes $V_i^h(t):=|\Omega_i^h(t)|$ satisfy $V_i^h(t)\ge h^2/C_4$ and $V_i^h(t)\ge V_i^h(0)/C_4$;
\item $\int_0^T\alpha_h(t)\mathscr G(X^h(t))\,\diff t\le C_4h^2$;
\item if $\rho_0\in L^p(\Omega)$ for some $1<p\le\infty$, then 
\begin{equation}\label{eq:Bh-Lp}
B_h(T)\le C_5\,h\,A_h^{1/\min\{p,2\}} .
\end{equation}
\end{enumerate}
The constant $C_4$ depends only on $C_1$, $\Omega$ and $\Lambda(T)$, and $C_5$ depends also on $p$ and $\|\rho_0\|_{L^p}$.
\end{corollary}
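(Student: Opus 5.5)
The plan is to prove the four items in order, each resting on the previous one. Everything reduces to a single geometric fact about the minimizing set, namely that minimizers have well-separated generators. This is the step I expect to be the main obstacle.

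\emph{Step 1 (separation of minimizers).} I will show that there is $c_0>0$, depending only on $C_1$ and $\Omega$, such that every $Y\in\mathcal M$ satisfies $|y_i-y_j|\ge c_0h$ for $i\ne j$. The argument is by contradiction, through a competitor.
\begin{itemize}
\item Suppose $|y_i-y_j|=\delta$ is small. Delete $y_j$ and give its cell to $y_i$. Using $\mathscr D(Y)\le C_1h$ from \eqref{eq:H}, this raises $\mathscr F$ by at most
\[
\int_{\Omega_j}\bigl(|x-y_i|^2-|x-y_j|^2\bigr)\,\diff x\le V_j\bigl(\delta^2+2C_1h\,\delta\bigr)\le C h^3\delta .
\]
\item Reinsert the freed generator inside a cell whose energy is at least average. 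A Zador-type lower bound for a polygon of area $V$ (its second moment is at least that of a disc of the same area) gives $\mathscr F^\ast\ge |\Omega|^2/(2\pi N)$. Hence some cell has energy at least $c\,h^4$.
\item Splitting that cell with a second generator, for instance placed symmetrically about its centroid along a principal axis, lowers the energy by a fixed fraction of it, so by at least $c'h^4$.
\end{itemize}
If $\delta<c_0h$ with $c_0$ small, the net change is negative, contradicting minimality. Then set $C_3:=c_0/4$. For $X\in\mathcal U_r$ with $r\le C_3h$, each generator lies within $r$ of the matching generator of some $Y\in\mathcal M$. Therefore $|x_i-x_j|\ge 4C_3h-2r\ge 2C_3h$, which is (i).

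\emph{Step 2 (volumes).} By (i), $\Omega_i^h(t)$ contains $B(x_i^h(t),C_3h)\cap\Omega$. Since $\Omega$ is convex and Lipschitz (or a torus), for $h$ small this set has area at least $h^2/C$, which gives the first bound in (ii). Moreover $V_i^h(0)\le\pi C_1^2h^2$ by \eqref{eq:H}, so $V_i^h(0)/C_4\le V_i^h(t)$ after enlarging $C_4$.

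\emph{Step 3 (item (iii)).} I will integrate \eqref{eq:dFdt} on $[0,T]$, which gives
\[
2\int_0^T\alpha_h\mathscr G\,\diff t=\mathscr F(X^0)-\mathscr F(X(T))+\int_0^T\mathscr T_v\,\diff t .
\]
The time integral of $\mathscr T_v$ is bounded by $C_2\Lambda(T)h^2$, using \eqref{eq:T_bound_h2}. For the energy difference, note that $\mathscr F(X(T))\ge\mathscr F^\ast$. Then integrate $\nabla_{x_i}\mathscr F=2V_i(x_i-c_i)$ along the segment from a nearest $Y\in\mathcal M$ to $X^0$; this stays in $\mathcal U_r$, where $|x_i-c_i|\le C_1h$. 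This gives
\[
\mathscr F(X^0)-\mathscr F^\ast\le 2C_1h\,|\Omega|^{1/2}\,\Bigl(\sum_i V_i\Bigr)^{1/2}\,r\le Ch^2 .
\]

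\emph{Step 4 (item (iv)).} Set $q=\min\{p,2\}$ and $q'=q/(q-1)\ge2$, and write $d_i:=|c_i-x_i|\le C_1h$. By H\"older on each initial cell, $M_i\le\|\rho_0\|_{L^q(\Omega_i(0))}V_i(0)^{1/q'}$. A discrete H\"older inequality followed by (ii) then gives
\[
\sum_iM_id_i\le C\|\rho_0\|_{L^p}\Bigl(\sum_iV_i(t)\,d_i^{q'}\Bigr)^{1/q'}\le C\|\rho_0\|_{L^p}\bigl(h^{q'-2}\mathscr G\bigr)^{1/q'} .
\]
Inserting this into \eqref{eq:lloyd_defect_weighted} and applying H\"older in time with respect to the measure $\alpha_h\,\diff t$, I get
\[
B_h(T)\le C e^{\Lambda(T)}A_h^{1/q}\Bigl(h^{q'-2}\int_0^T\alpha_h\mathscr G\,\diff t\Bigr)^{1/q'} .
\]
By (iii) the bracket is at most $C h^{q'}$, which yields \eqref{eq:Bh-Lp}.
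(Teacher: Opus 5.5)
Your proposal is correct. Steps 2--4 follow the paper closely. In Step 4, H\"older on the initial cells together with $V_i^h(0)\le C_4V_i^h(t)$ is just the paper's Jensen bound on $\|\rho^h(t)\|_{L^p}$ plus the interpolation of $D_h$, written cell by cell.

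The genuine difference is the insertion half of the competitor in Step 1; your removal estimate is the paper's \eqref{eq:removal}. You find a cell of energy at least $h^4/(2\pi)$ through the bathtub bound $\int_{\Omega_k}|x-y_k|^2\,\diff x\ge V_k^2/(2\pi)$. Choose this cell among those of the configuration with $y_j$ removed: its total energy is at least $\mathscr F^\ast$ and is shared among $N-1$ cells. You then need a splitting lemma, and this is the one step you leave unjustified. It is true, but it uses the convexity of the cell $P$ in an essential way. Replace its generator by $c\pm se$, with $c$ the centroid and $e$ a principal direction. This removes $|P|\,|y_k-c|^2$, and for the optimal $s$ it lowers $\int_P|x-c|^2\,\diff x$ by $|P|^{-1}\bigl(\int_P|(x-c)\cdot e|\,\diff x\bigr)^2$. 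Bounding the latter below by a fixed fraction of $\int_P|x-c|^2\,\diff x$ is a reverse H\"older inequality for the marginal of $P$ along $e$, whose density is concave by Brunn's theorem. Without convexity no universal fraction exists.

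The paper's insertion is more elementary. With $R=h/\sqrt{2\pi}$ one has $N\pi R^2=|\Omega|/2$, so some $z\in\Omega$ lies at distance at least $R$ from every generator. Adding $z$ then lowers $\mathscr F$ by at least $\frac{R^2}{2}|B(z,R/4)\cap\Omega|\ge c_\Omega R^4/32$, by \eqref{eq:cone}. Your route gives a separation constant independent of $\Omega$. That gains nothing here, since \eqref{eq:cone} is needed in Step 2 anyway; note also that it holds for all radii up to $\operatorname{diam}\Omega$, so your restriction to small $h$ is unnecessary.

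Similarly, in Step 3, integrating $\nabla\mathscr F$ along a segment to a nearest minimizer is valid. The paper simply uses $\mathscr F\ge0$ and $\mathscr F(X^h(0))\le|\Omega|\,\mathscr D(X^h(0))^2\le C_1^2|\Omega|h^2$, which does not even require $r_h\le C_3h$.
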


\begin{proof}
There exists $c_\Omega>0$ such that
\begin{equation}\label{eq:cone}
|B(z,s)\cap\Omega|\ge c_\Omega s^2
\qquad\text{for all }z\in\Omega\text{ and }0<s\le\operatorname{diam}\Omega .
\end{equation}
For a bounded Lipschitz domain, this is the uniform interior cone condition.
For a flat torus, let $r_0$ be half the length of the shortest nonzero period of $\Omega$. For $0<s\le r_0$ the ball $B(z,s)$ is isometric to a Euclidean disc, so $|B(z,s)|=\pi s^2$, while for $r_0<s\le\operatorname{diam}\Omega$ we have $|B(z,s)|\ge\pi r_0^2$; thus $c_\Omega:=\pi(r_0/\operatorname{diam}\Omega)^2$ suffices.

We first prove that the generators of any $Y=(y_1,\dots,y_N)\in\mathcal M$ are separated.
Since $Y\in\mathcal M\subset\mathcal U_{r_h}$, \eqref{eq:H} gives $\operatorname{diam}\Omega_k(Y)\le C_1h$, and thus $V_k(Y)\le\pi C_1^2h^2$, for all $k$.
Let $\delta:=\min_{k\ne l}|y_k-y_l|=|y_i-y_j|$.
Removing $y_j$ gives a configuration $Y'$ with $N-1$ generators, whose energy $\mathscr F(Y')=\int_\Omega\min_{k\ne j}|x-y_k|^2\,\diff x$ satisfies
\begin{equation}\label{eq:removal}
\begin{aligned}
\mathscr F(Y')
&\le\mathscr F(Y)+\int_{\Omega_j(Y)}\left(|x-y_i|^2-|x-y_j|^2\right)\diff x\\
&\le\mathscr F(Y)+\pi C_1^2h^2\,\delta\left(2C_1h+\delta\right),
\end{aligned}
\end{equation}
where we used $\min_{k\ne j}|x-y_k|^2\le|x-y_i|^2$, as well as $|x-y_i|\le|x-y_j|+\delta\le C_1h+\delta$ and $|x-y_j|\le C_1h$ on $\Omega_j(Y)$.
Let $R:=h/\sqrt{2\pi}$. Since $N\pi R^2=|\Omega|/2<|\Omega|$, the balls $B(y_k,R)$, $1\le k\le N$, do not cover $\Omega$, and we may fix $z\in\Omega$ with $|z-y_k|\ge R$ for all $k$.
Let $Y''$ be the configuration obtained by adding $z$ to $Y'$.
For $x\in B(z,R/4)$ we have $\min_{k\ne j}|x-y_k|\ge\min_k|x-y_k|\ge3R/4$ and $|x-z|\le R/4$, whence, by \eqref{eq:cone},
\begin{equation}\label{eq:insertion}
\begin{aligned}
\mathscr F(Y'')
&\le\mathscr F(Y')-\int_{B(z,R/4)\cap\Omega}\left(\min_{k\ne j}|x-y_k|^2-|x-z|^2\right)\diff x\\
&\le\mathscr F(Y')-\frac{R^2}{2}\,|B(z,R/4)\cap\Omega|
\le\mathscr F(Y')-\frac{c_\Omega R^4}{32} .
\end{aligned}
\end{equation}
Since $Y''\in\Omega^N$ has $N$ distinct generators, $\mathscr F(Y'')\ge\mathscr F^\ast=\mathscr F(Y)$.
Combining this with \eqref{eq:removal} and \eqref{eq:insertion}, and using $R^4=h^4/(4\pi^2)$, we find
\begin{equation}\label{eq:sep_ineq}
\frac{c_\Omega h^4}{128\pi^2}\le\pi C_1^2h^2\,\delta\left(2C_1h+\delta\right).
\end{equation}
If $\delta\le h$, then $\delta(2C_1h+\delta)\le(2C_1+1)h\delta$, and \eqref{eq:sep_ineq} yields $\delta\ge c_\Omega h/(128\pi^3C_1^2(2C_1+1))$. Therefore
\begin{equation}\label{eq:sep_min}
\delta\ge4C_3h,
\qquad
C_3:=\frac14\min\left\{1,\frac{c_\Omega}{128\pi^3C_1^2(2C_1+1)}\right\}.
\end{equation}
Now let $r_h\le C_3h$ and fix $t\in[0,T]$.
By Theorem~\ref{thm:alpha-feedback}, $\dist(X^h(t),\mathcal M)<r_h$, so there is $Y\in\mathcal M$ with $|x_i^h(t)-y_i|<r_h$ for all $i$.
By \eqref{eq:sep_min},
\[
|x_i^h(t)-x_j^h(t)|\ge|y_i-y_j|-2r_h\ge4C_3h-2C_3h=2C_3h
\qquad\text{for }i\ne j,
\]
which is \textup{(i)}.
Consequently, if $|x-x_i^h(t)|<C_3h$, then $|x-x_j^h(t)|>C_3h>|x-x_i^h(t)|$ for all $j\ne i$, that is, $B(x_i^h(t),C_3h)\cap\Omega\subset\Omega_i^h(t)$.
By \eqref{eq:cone} and \eqref{eq:H},
\begin{equation}\label{eq:vol_bounds}
V_i^h(t)\ge c_\Omega C_3^2h^2
\ge\frac{c_\Omega C_3^2}{\pi C_1^2}\,V_i^h(0),
\end{equation}
which gives \textup{(ii)} for $C_4\ge\max\{1,\pi C_1^2\}/(c_\Omega C_3^2)$.
For \textup{(iii)}, we integrate \eqref{eq:dFdt} on $[0,T]$ and use $\mathscr F\ge0$, \eqref{eq:T_bound_h2}, and $\mathscr F(X^h(0))\le\sum_iV_i^h(0)\,\mathscr D(X^h(0))^2\le|\Omega|C_1^2h^2$, to get
\[
2\int_0^T\alpha_h(t)\mathscr G(X^h(t))\,\diff t
\le\mathscr F(X^h(0))+\int_0^T|\mathscr T_v(t)|\,\diff t
\le\left(C_1^2|\Omega|+C_2\Lambda(T)\right)h^2 .
\]
This gives \textup{(iii)} for $C_4\ge(C_1^2|\Omega|+C_2\Lambda(T))/2$.

We finally prove \textup{(iv)}.
Let $\rho^h$ be the Vorono\"i histogram of Theorem~\ref{thm:W1-convergence} and let
\[
D_h(t,x):=\sum_{i=1}^N\left|c_i(X^h(t))-x_i^h(t)\right|\mathbf 1_{\Omega_i^h(t)}(x).
\]
Then
\begin{equation}\label{eq:Dh}
\sum_{i=1}^N M_i^h\left|c_i(X^h(t))-x_i^h(t)\right|
=\int_\Omega\rho^h(t,x)D_h(t,x)\,\diff x,
\qquad
\|D_h(t)\|_{L^2}^2=\mathscr G(X^h(t)),
\end{equation}
and $\|D_h(t)\|_{L^\infty}\le C_1h$, since $|c_i-x_i|\le\operatorname{diam}\Omega_i\le C_1h$ by \eqref{eq:H}.
Let $1<p\le2$ and $q:=p/(p-1)\ge2$.
By \textup{(ii)} and Jensen's inequality,
\begin{equation}\label{eq:rho_Lp}
\begin{aligned}
\|\rho^h(t)\|_{L^p}^p
=\sum_{i=1}^N V_i^h(t)^{1-p}\,(M_i^h)^p
&\le C_4^{p-1}\sum_{i=1}^N V_i^h(0)\left(\frac{M_i^h}{V_i^h(0)}\right)^p\\
&\le C_4^{p-1}\sum_{i=1}^N\int_{\Omega_i^h(0)}\rho_0^p\,\diff x
=C_4^{p-1}\|\rho_0\|_{L^p}^p .
\end{aligned}
\end{equation}
By H\"older's inequality, the interpolation inequality $\|D_h\|_{L^q}\le\|D_h\|_{L^\infty}^{1-2/q}\|D_h\|_{L^2}^{2/q}$, \eqref{eq:Dh} and \eqref{eq:rho_Lp},
\begin{equation}\label{eq:rhoD}
\begin{aligned}
\int_\Omega\rho^h(t,x)D_h(t,x)\,\diff x
&\le\|\rho^h(t)\|_{L^p}\|D_h(t)\|_{L^q}\\
&\le C_4^{1/q}\|\rho_0\|_{L^p}(C_1h)^{1-2/q}\mathscr G(X^h(t))^{1/q} .
\end{aligned}
\end{equation}
Plugging \eqref{eq:Dh} and \eqref{eq:rhoD} into the definition \eqref{eq:lloyd_defect_weighted} of $B_h(T)$, then using H\"older's inequality with respect to the measure $\alpha_h(t)\,\diff t$ with exponents $p$ and $q$, and finally \textup{(iii)}, we get
\begin{align*}
B_h(T)
&\le e^{\Lambda(T)}C_4^{1/q}\|\rho_0\|_{L^p}(C_1h)^{1-2/q}\int_0^T\alpha_h(t)\,\mathscr G(X^h(t))^{1/q}\,\diff t\\
&\le e^{\Lambda(T)}C_4^{1/q}\|\rho_0\|_{L^p}(C_1h)^{1-2/q}A_h^{1/p}\left(\int_0^T\alpha_h(t)\mathscr G(X^h(t))\,\diff t\right)^{1/q}\\
&\le e^{\Lambda(T)}C_4^{2/q}C_1^{1-2/q}\|\rho_0\|_{L^p}\,h\,A_h^{1/p}.
\end{align*}
This is \eqref{eq:Bh-Lp} for $1<p\le2$.
For $2<p\le\infty$, we have $\|\rho_0\|_{L^2}\le|\Omega|^{1/2-1/p}\|\rho_0\|_{L^p}$ since $\Omega$ is bounded, and \eqref{eq:Bh-Lp} follows from the case $p=2$.
\end{proof}

Corollary~\ref{cor:mesh-bounds} gives quasi-uniformity and controls the cell volumes, but our estimates do not suffice for convergence.
Indeed, \eqref{eq:Bh-Lp} would give $B_h(T)\to0$ if $A_h=o(h^{-\min\{p,2\}})$, but we have no such bound on $A_h$, and we do not know how to prove that $B_h(T)\to0$ for this feedback.
The next section introduces a different feedback for which the energy dissipation controls the total correction $A_h$ and yields convergence.

\section{A more general feedback which retains consistency}\label{subsec:adaptive-energy}

 
We now allow any $\alpha_h$ which is locally Lipschitz on the set of configurations with pairwise distinct generators and satisfies $0\leq \alpha_h(X)\leq \mathscr G(X)/h^{5/2}$.
The energy $\mathscr F$ is then allowed to grow with the deformation caused by $v$, but it stays of order $h^2$, and the dissipation $\int_0^T\alpha_h\mathscr G\,\diff t$ controls both the centroid displacements and the total correction $A_h$.
We expect this regime to be the most interesting one in view of numerical applications.

Throughout this section, $\Omega=\R^2/\mathcal L$ is a flat torus as in \Secref{sec:moving}.
Given representatives $x_i\in\R^2$ of the generators,  consider the $\mathcal L$-periodic set $\{x_i+k:\ 1\le i\le N,\ k\in\mathcal L\}\subset\R^2$ and its Vorono\"i tessellation of $\R^2$.
The cell of $x_i$ in this tessellation is the bounded convex polygon
\[
P_i:=\left\{y\in\R^2:\ |y-x_i|\le|y-x_j-k|\ \text{for all }1\le j\le N,\ k\in\mathcal L\right\},
\]
that is, the set of $y\in\R^2$ whose Euclidean distance to $x_i$ equals $\min_j|y-x_j|$; the cell of $x_j+k$ is $P_j+k$, and $\Omega_i$ is the image of $P_i$ in $\Omega$, up to a set of measure zero. In particular
\[
V_i=|P_i|,
\qquad c_i-x_i=\frac1{V_i}\int_{P_i}(y-x_i)\,\diff y,
\]
and the displacement $c_i-x_i$ does not depend on the choice of the representative $x_i$.

\begin{theorem}\label{thm:adaptive-energy-convergence}
Let $v\in L^1((0,T);C^{0,1}(\Omega;\R^2))$.
For each $N\ge2$, set $h=\sqrt{|\Omega|/N}$ and take distinct initial generators $X^h(0)$ with
\begin{equation}\label{eq:adaptive-initial-energy}
\mathscr F(X^h(0))\le C_1h^2,
\end{equation}
where $C_1$ does not depend on $h$.
Let $\alpha_h$ be locally Lipschitz on the open set of configurations with pairwise distinct generators, with
$0\leq \alpha_h(X)\leq \mathscr G(X)/h^{5/2}$,
and consider
\begin{equation}\label{eq:adaptive-energy-feedback}
\dot x_i^h(t)=v(t,x_i^h(t))+\alpha_h(X^h(t))\left(c_i(X^h(t))-x_i^h(t)\right),
\qquad 1\le i\le N.
\end{equation}
Then this Cauchy problem has a unique absolutely continuous solution on $[0,T]$, the generators remain distinct, and $\alpha_h(t):=\alpha_h(X^h(t))$ is nonnegative and bounded for each fixed $h$.
There is a constant $C_2$, depending only on $C_1$, $T$, $\Omega$ and $\Lambda(T)$, such that
\begin{align}
\sup_{t\in[0,T]}\mathscr F(X^h(t))
+\int_0^T\alpha_h(t)\mathscr G(X^h(t))\,\diff t
&\le C_2 h^2,\label{eq:adaptive-energy-dissipation}\\
d_h=\sup_{t\in[0,T]}\mathscr D(X^h(t))
&\le C_2 h^{1/2},\label{eq:adaptive-diameter}\\
A_h=\int_0^T\alpha_h(t)\,\diff t
&\le C_2 h^{-1/4},\label{eq:adaptive-alpha-budget}\\
\int_0^T\left(\frac{\mathscr G(X^h(t))}{h^2}\right)^2\,\diff t
&\le C_2 h^{1/2}\quad\text{if }\alpha_h=\mathscr G/h^{5/2}.\label{eq:adaptive-centroid-error}
\end{align}
Moreover for every $\rho_0\in L^1(\Omega)$ with $\rho_0\ge0$, the Vorono\"i histogram $\mu^h$ of Theorem~\ref{thm:W1-convergence} and the solution $\mu$ of \eqref{eq:continuity} satisfy, for all $h\le1$,
\begin{equation}\label{eq:adaptive-transport-convergence}
B_h(T)+\sup_{t\in[0,T]}W_1(\mu^h(t),\mu(t))
\le C_2\|\rho_0\|_{L^1}h^{1/4}.
\end{equation}
\end{theorem}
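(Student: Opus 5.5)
The plan is to obtain an energy inequality for $\mathscr F$ that does not use quasi-uniformity, and then to read off every estimate of the statement from it. The three inputs that follow from the energy are a covering-radius argument for $d_h$, a Cauchy--Schwarz argument for $A_h$, and Theorem~\ref{thm:W1-convergence} for the transport bound.

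\emph{Energy inequality.} As in the proof of Theorem~\ref{thm:alpha-feedback}, Proposition~\ref{prop:env-gradient} gives, for a.e.\ $t$ while the generators are distinct,
\[
\frac{\diff}{\diff t}\mathscr F(X)=\mathscr T_v-2\alpha_h\mathscr G,
\qquad
\mathscr T_v=2\sum_i\int_{\Omega_i}(x_i-x)\cdot v(x_i)\,\diff x .
\]
I would split $v(x_i)=(v(x_i)-v(x))+v(x)$.

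The first piece contributes at most $2L(t)\mathscr F(X)$. For the second piece, set $\phi(x):=\frac12\min_i|x-x_i|^2$. This function is Lipschitz and $\mathcal L$-periodic, and $x_i-x=-\nabla\phi$ on $\Omega_i$. Integrating by parts on the torus, the second piece equals $\int_\Omega\phi\,\nabla\cdot v\,\diff x$, which is at most $L\mathscr F$ in absolute value since $|\nabla\cdot v|\le2L$.

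Hence $\frac{\diff}{\diff t}\mathscr F\le3L\mathscr F-2\alpha_h\mathscr G$. Gronwall's lemma and \eqref{eq:adaptive-initial-energy} then give \eqref{eq:adaptive-energy-dissipation} with a constant of the form $(1+3\Lambda(T)e^{3\Lambda(T)})C_1$ or similar. Note that $\mathscr G\le\mathscr F$, because $V_i|x_i-c_i|^2\le\int_{\Omega_i}|x-x_i|^2\,\diff x$. Consequently $\alpha_h\le\mathscr F/h^{5/2}\le C_2h^{-1/2}$, so $\alpha_h$ is bounded along the trajectory.

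\emph{Well-posedness and distinctness.} Local existence and uniqueness follow from local Lipschitz continuity (Remark~\ref{rem:lipschitz}). To exclude collisions, take $i\ne j$ and set $e:=(x_i-x_j)/|x_i-x_j|$ and $m:=(x_i+x_j)/2$, using the representatives realizing the torus distance. The cell $P_i$ lies in the half-plane $\{(y-m)\cdot e\ge0\}$ and $P_j$ in the opposite one, so $(c_i-c_j)\cdot e\ge0$. Then $\delta:=|x_i-x_j|$ satisfies
\[
\dot\delta\ge-L\delta-\alpha_h\delta ,
\]
so $\delta(t)\ge\delta(0)e^{-\Lambda(T)-\int_0^T\alpha_h}>0$. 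Together with the boundedness of $\alpha_h$ and $v$, this keeps the solution in a compact set of distinct configurations, so it extends to $[0,T]$. Some care is needed here with the minimum over pairs and over torus representatives. I expect this step, rather than the estimates, to be the delicate point, and I would handle it through the minimum pairwise distance, which is Lipschitz in time.

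\emph{Diameter, budget, centroid error.} For the diameter, let $R:=\sup_x\min_i|x-x_i|$. Choose $x$ with $B(x,R)$ free of generators. Then $\mathscr F\ge\int_{B(x,R/2)}(R/2)^2\,\diff y\gtrsim R^4$, provided $R$ is below the injectivity radius of the torus; otherwise $\mathscr F\gtrsim1$, which is excluded for small $h$. Thus $R\le Ch^{1/2}$, and since $\mathscr D\le2R$ this proves \eqref{eq:adaptive-diameter}.

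For the budget, the constraint $\alpha_h\le\mathscr G/h^{5/2}$ gives $\alpha_h^2\le\alpha_h\mathscr G\,h^{-5/2}$. Cauchy--Schwarz in time and \eqref{eq:adaptive-energy-dissipation} then give
\[
A_h\le T^{1/2}\bigl(C_2h^2\cdot h^{-5/2}\bigr)^{1/2}=Ch^{-1/4},
\]
which is \eqref{eq:adaptive-alpha-budget}.

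For the centroid error, when $\alpha_h=\mathscr G/h^{5/2}$ we have $\int_0^T\mathscr G^2h^{-5/2}\,\diff t=\int_0^T\alpha_h\mathscr G\,\diff t\le C_2h^2$, which is \eqref{eq:adaptive-centroid-error} after dividing by $h^{3/2}$.

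\emph{Transport bound.} Since only $\rho_0\in L^1$ is assumed, I would bound $|c_i-x_i|\le\mathscr D\le C_2h^{1/2}$ directly. This gives
\[
B_h(T)\le e^{\Lambda(T)}\|\rho_0\|_{L^1}C_2h^{1/2}A_h\le C\|\rho_0\|_{L^1}h^{1/4}.
\]
Theorem~\ref{thm:W1-convergence} with $\mathscr E_h=0$ then gives $W_1\le M(1+e^{\Lambda})C_2h^{1/2}+B_h(T)$. This is at most $C\|\rho_0\|_{L^1}h^{1/4}$ for $h\le1$, which is \eqref{eq:adaptive-transport-convergence}.
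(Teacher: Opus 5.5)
Your proposal is correct and follows the paper's proof essentially step for step: the same splitting $v(x_i)=(v(x_i)-v(x))+v(x)$ in $\mathscr T_v$ with an integration by parts against $\operatorname{div}v$, the same bisector argument for separation, the same covering-radius bound $\mathscr F\gtrsim R^4$ for $d_h$, Cauchy--Schwarz for $A_h$, and Theorem~\ref{thm:W1-convergence} with $|c_i-x_i|\le d_h$ for the transport bound. Two minor points: the divergence piece actually equals $2\int_\Omega\phi\,\nabla\cdot v\,\diff x$, so the inequality is $\frac{\diff}{\diff t}\mathscr F\le 4L\mathscr F-2\alpha_h\mathscr G$ rather than one with $3L$, which only changes $C_2$. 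Also, the paper avoids your concern about switching torus representatives by fixing $k\in\mathcal L$, running the bisector argument for $w=x_i-x_j-k$ (valid for every $k$, not only the minimizing one), and minimizing over $k$ only at the end.
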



\begin{proof}
In the proof $C$ denotes a constant depending only on $C_1$, $T$, $\Omega$ and $\Lambda(T)$, whose value may change from one line to the next.
Fix $h$.
By the local Lipschitz assumption on $\alpha_h$ and Remark~\ref{rem:lipschitz}, the right-hand side of \eqref{eq:adaptive-energy-feedback} is locally Lipschitz in $X$ on the open set of configurations with pairwise distinct generators, with locally integrable bounds in $t$.
Hence \eqref{eq:adaptive-energy-feedback} has a unique absolutely continuous solution $X^h$ on a maximal interval $[0,T_\ast)$, with $T_\ast\le T$, and if $T_\ast<T$ then $\min_{i\ne j}|x_i^h(t)-x_j^h(t)|\to0$ as $t\to T_\ast$, since $\Omega^N$ is compact.
Since $|c_i-x_i|\le\operatorname{diam}\Omega$ and $\sum_iV_i=|\Omega|$, we have $\mathscr G\le|\Omega|\operatorname{diam}(\Omega)^2$, whence
\begin{equation}\label{eq:alpha_bounded}
0\le\alpha_h(t)\le\overline\alpha_h:=|\Omega|\operatorname{diam}(\Omega)^2h^{-5/2}
\qquad\text{for }t\in[0,T_\ast).
\end{equation}
Fix $i\ne j$, representatives $x_i^h(t),x_j^h(t)\in\R^2$ depending continuously on $t$, and $k\in\mathcal L$.
Set $w(t):=x_i^h(t)-x_j^h(t)-k$.
The cells $P_i$ and $P_j+k$ lie on opposite sides of the bisector of $x_i^h(t)$ and $x_j^h(t)+k$, and so do their centroids $c_i$ and $c_j+k$; hence
\begin{equation}\label{eq:bisector}
w(t)\cdot\left(c_i(X^h(t))-c_j(X^h(t))-k\right)\ge0 .
\end{equation}
By \eqref{eq:adaptive-energy-feedback}, and since $(c_i-x_i^h)-(c_j-x_j^h)=(c_i-c_j-k)-w$, we have for a.e.\ $t\in[0,T_\ast)$
\begin{align*}
\frac12\frac{\diff}{\diff t}|w(t)|^2
&=w(t)\cdot\left(v(t,x_i^h(t))-v(t,x_j^h(t)+k)\right)\\
&\quad+\alpha_h(t)\,w(t)\cdot\left(c_i(X^h(t))-c_j(X^h(t))-k\right)
-\alpha_h(t)|w(t)|^2,
\end{align*}
where we used the periodicity of $v$.
Since $|v(t,x_i^h(t))-v(t,x_j^h(t)+k)|\le L(t)|w(t)|$, we deduce from \eqref{eq:bisector} and \eqref{eq:alpha_bounded} that
\[
\frac12\frac{\diff}{\diff t}|w(t)|^2\ge-\left(L(t)+\overline\alpha_h\right)|w(t)|^2,
\]
whence $|w(t)|\ge|w(0)|e^{-\Lambda(t)-\overline\alpha_ht}$. Since $|w(0)|\ge|x_i^h(0)-x_j^h(0)|$, taking the minimum over $k\in\mathcal L$ we get
\[
\left|x_i^h(t)-x_j^h(t)\right|\ge\left|x_i^h(0)-x_j^h(0)\right|e^{-\Lambda(T)-\overline\alpha_hT}>0
\qquad\text{for all }t\in[0,T_\ast).
\]
Therefore $T_\ast=T$, so that the solution exists on $[0,T]$, the generators remain distinct, and $\alpha_h$ is bounded by \eqref{eq:alpha_bounded}.

By Proposition~\ref{prop:env-gradient} on the torus, $\mathscr F$ is $C^1$ on the set of configurations with pairwise distinct generators, with $\nabla_{x_i}\mathscr F=2V_i(x_i-c_i)$.
By the chain rule and \eqref{eq:adaptive-energy-feedback}, for a.e.\ $t\in[0,T]$,
\begin{equation}\label{eq:adaptive-energy-identity}
\frac{\diff}{\diff t}\mathscr F(X^h(t))
=\mathscr T_v(t)-2\alpha_h(t)\mathscr G(X^h(t)),
\end{equation}
where, denoting by $P_i(t)$ the cell of $x_i^h(t)$ in the periodic tessellation,
\begin{align*}
\mathscr T_v(t)&:=2\sum_{i=1}^N V_i(X^h(t))\left(x_i^h(t)-c_i(X^h(t))\right)\cdot v(t,x_i^h(t))\\
&=2\sum_{i=1}^N\int_{P_i(t)}\left(x_i^h(t)-y\right)\cdot v(t,x_i^h(t))\,\diff y .
\end{align*}
We write $v(t,x_i^h(t))=(v(t,x_i^h(t))-v(t,y))+v(t,y)$.
For $y\in P_i(t)$, the Euclidean distance $|y-x_i^h(t)|$ equals $\min_j|y-x_j^h(t)|$, so $|v(t,x_i^h(t))-v(t,y)|\le L(t)|x_i^h(t)-y|$, and therefore
\begin{align}
2\sum_{i=1}^N\int_{P_i(t)}\bigl(x_i^h(t)-y\bigr)\cdot\bigl(v(t,x_i^h(t))-v(t,y)\bigr)\diff y
&\le2L(t)\sum_{i=1}^N\int_{P_i(t)}\left|x_i^h(t)-y\right|^2\diff y\notag\\
&=2L(t)\mathscr F(X^h(t)).\label{eq:Tv_part1}
\end{align}
Then let $f(t,y):=\min_j|y-x_j^h(t)|^2$.
The function $f(t,\cdot)$ is $\mathcal L$-periodic and Lipschitz, $\nabla_yf(t,y)=2(y-x_i^h(t))$ for a.e.\ $y\in P_i(t)$, and $\int_\Omega f(t,y)\,\diff y=\mathscr F(X^h(t))$.
Integrating by parts and using $|\operatorname{div}v(t,\cdot)|\le2L(t)$, we get
\begin{align}
2\sum_{i=1}^N\int_{P_i(t)}\left(x_i^h(t)-y\right)\cdot v(t,y)\,\diff y
&=-\int_\Omega\nabla_yf(t,y)\cdot v(t,y)\,\diff y\notag\\
&=\int_\Omega f(t,y)\operatorname{div}v(t,y)\,\diff y\notag\\
&\le2L(t)\mathscr F(X^h(t)).\label{eq:Tv_part2}
\end{align}
Combining \eqref{eq:adaptive-energy-identity}, \eqref{eq:Tv_part1} and \eqref{eq:Tv_part2}, we find
\begin{equation}\label{eq:adaptive-global-energy-inequality}
\frac{\diff}{\diff t}\mathscr F(X^h(t))+2\alpha_h(t)\mathscr G(X^h(t))
\le4L(t)\mathscr F(X^h(t))
\qquad\text{for a.e.\ }t\in[0,T].
\end{equation}
Multiplying \eqref{eq:adaptive-global-energy-inequality} by $e^{-4\Lambda(t)}$, integrating on $[0,t]$ and using \eqref{eq:adaptive-initial-energy}, we get
\[
e^{-4\Lambda(t)}\mathscr F(X^h(t))
+2\int_0^t e^{-4\Lambda(s)}\alpha_h(s)\mathscr G(X^h(s))\,\diff s
\le\mathscr F(X^h(0))\le C_1h^2
\]
for all $t\in[0,T]$,
whence
\begin{equation}\label{eq:energy_bounds}
\sup_{t\in[0,T]}\mathscr F(X^h(t))\le e^{4\Lambda(T)}C_1h^2,
\qquad
\int_0^T\alpha_h(t)\mathscr G(X^h(t))\,\diff t\le\frac12e^{4\Lambda(T)}C_1h^2 .
\end{equation}
This proves \eqref{eq:adaptive-energy-dissipation}.

We now bound the cell diameters by the energy.
Inspired by the arguments in \cite[Section~2.5]{KriegSonnleitner2024} and \cite[proof of Proposition~2.1]{ElNmeirLuschgyPages2022}, we proceed as follows.
Let $X\in\Omega^N$ have pairwise distinct generators, let $R(X):=\max_{y\in\Omega}\min_j|y-x_j|$, and let $y_\ast\in\Omega$ be a point where the maximum is attained.
Since $y_\ast$ belongs to the closure of some cell $\Omega_i(X)$, we have $R(X)=|y_\ast-x_i|\le\operatorname{diam}\Omega_i(X)\le\mathscr D(X)$.
Since $y\mapsto\min_j|y-x_j|$ is $1$-Lipschitz, $\min_j|y-x_j|\ge R(X)/2$ for $y\in B(y_\ast,R(X)/2)$, whence
\begin{equation}\label{eq:F_vs_R}
\mathscr F(X)
=\int_\Omega\min_j|y-x_j|^2\,\diff y
\ge\frac{R(X)^2}{4}\left|B\left(y_\ast,R(X)/2\right)\right|
\ge\frac{c_\Omega}{16}R(X)^4,
\end{equation}
where $c_\Omega$ is the constant in \eqref{eq:cone}.
Furthermore, for $y\in P_i$ the Euclidean distance $|y-x_i|$ equals $\min_j|y-x_j|\le R(X)$, so $P_i\subset B(x_i,R(X))$, and $c_i\in P_i$ by convexity. Hence
\begin{equation}\label{eq:adaptive-radius-displacement}
\operatorname{diam}\Omega_i(X)\le2R(X),
\qquad |c_i(X)-x_i|\le R(X)\le\mathscr D(X).
\end{equation}
By \eqref{eq:F_vs_R} and \eqref{eq:energy_bounds}, one obtains
\begin{equation}\label{eq:rf}
R(X^h(t))\le(16e^{4\Lambda(T)}C_1/c_\Omega)^{1/4}h^{1/2}
\end{equation}
 for all $t\in[0,T]$, and \eqref{eq:adaptive-diameter} follows from \eqref{eq:adaptive-radius-displacement}.

Since $\alpha_h(t)^2\le h^{-5/2}\alpha_h(t)\mathscr G(X^h(t))$, Cauchy--Schwarz and \eqref{eq:energy_bounds} give
\begin{equation}\label{eq:alpha_squared}
A_h^2\le T\int_0^T\alpha_h(t)^2\,\diff t
\le T h^{-5/2}\int_0^T\alpha_h(t)\mathscr G(X^h(t))\,\diff t
\le Ch^{-1/2}.
\end{equation}
This proves \eqref{eq:adaptive-alpha-budget} for every feedback in the stated class.
For the particular choice $\alpha_h(X)=\mathscr G(X)/h^{5/2}$, which is locally Lipschitz by Remark~\ref{rem:lipschitz}, one also has
\[
\int_0^T\mathscr G(X^h(t))^2\,\diff t
=h^{5/2}\int_0^T\alpha_h(t)\mathscr G(X^h(t))\,\diff t
\le Ch^{9/2}.
\]
Dividing by $h^4$ gives \eqref{eq:adaptive-centroid-error}.

Finally, for every feedback in the stated class, we apply Theorem~\ref{thm:W1-convergence} to $X^h$, with $\alpha=\alpha_h$ and $v_i^h(t)=v(t,x_i^h(t))$.
Its assumptions all hold, since $X^h$ is absolutely continuous, $\alpha_h\in L^1(0,T)$ is nonnegative, the generators remain distinct, and the velocity residual $\mathscr E_h$ vanishes identically.
By \eqref{eq:lloyd_defect_weighted}, \eqref{eq:adaptive-radius-displacement}, \eqref{eq:adaptive-diameter} and \eqref{eq:adaptive-alpha-budget}, with $M=\|\rho_0\|_{L^1}$,
\begin{align*}
B_h(T)
&\le e^{\Lambda(T)}\int_0^T\alpha_h(t)\sum_{i=1}^NM_i^h\left|c_i(X^h(t))-x_i^h(t)\right|\diff t\\
&\le e^{\Lambda(T)}M\,d_h\,A_h
\le CMh^{1/2}h^{-1/4}=CMh^{1/4}.
\end{align*}
Plugging this into \eqref{eq:W1_conv_bound} and using \eqref{eq:adaptive-diameter} once more, we obtain
\[
\sup_{t\in[0,T]}W_1\left(\mu^h(t),\mu(t)\right)
\le M\left(1+e^{\Lambda(T)}\right)d_h+B_h(T)
\le CMh^{1/2}+CMh^{1/4}
\]
for all $h$, and hence $\sup_{t\in[0,T]}W_1(\mu^h(t),\mu(t))\le CMh^{1/4}$ for $h\le1$, which is \eqref{eq:adaptive-transport-convergence}.
\end{proof}

\begin{remark}
The critical estimate in the proof is \eqref{eq:rf}, which comes from the bound \eqref{eq:F_vs_R}.
That bound comes for free for any Vorono\"i tessellation, and does not use the structure of the Lloyd algorithm; we therefore expect it to be suboptimal.
\end{remark}

\section{One-sided Lipschitz velocities}\label{sec:osl}

Sections~\ref{sec:convergence}--\ref{subsec:adaptive-energy} are written in the case $v\in L^1((0,T);C^{0,1})$. We indicate here why the restriction is one of exposition, leaving the details to \Appref{sec:appendix-osl}.
Let $\Omega$ be a flat torus, and let $v\in L^1((0,T);L^\infty(\Omega;\R^2))$ be one-sided Lipschitz,
\begin{equation}\label{eq:osl}
\left(v(t,x)-v(t,y)\right)\cdot(x-y)\le L(t)|x-y|^2,
\qquad L\ge0,\quad L\in L^1(0,T),
\end{equation}
for almost every $t$ and all periodic representatives $x,y\in\R^2$. \eqref{eq:osl} bounds expansion but leaves compression free, and it is what keeps the forward flow $\Phi$ of $v$ unique, in the Filippov sense when $v$ jumps.
Transporting the initial measure along $\Phi$ is the notion of solution proposed by Poupaud and Rascle \cite{PoupaudRascle1997} for which we refer to \cite{bouja,LionsSeeger2024,DelarueLagoutiereVauchelet2017}.

Three of our arguments use nothing beyond \eqref{eq:osl}.
Theorem~\ref{thm:W1-convergence} relies on the inequality $\frac{\diff}{\diff t}|x-y|\le L(t)|x-y|+|r|$ between a discrete trajectory $\dot x=v(t,x)+r$ and an exact one, which is \eqref{eq:osl} followed by Gronwall.
Theorem~\ref{thm:alpha-feedback} controls the transport term through \eqref{eq:T_bound_CS}, in which only $\|v(t,\cdot)\|_{L^\infty}$ appears.
The inequality \eqref{eq:adaptive-global-energy-inequality} survives once the integration by parts of \Secref{subsec:adaptive-energy}, which uses $\operatorname{div}v$, is replaced by a summation over the faces of the tessellation, on each of which \eqref{eq:osl} may be applied to the two generators sharing it; this gives $\mathscr T_v\le4L(t)\mathscr F$.

What does not survive is the separation of the generators, which in Theorem~\ref{thm:adaptive-energy-convergence} came from a lower bound on $(v_i-v_j)\cdot(x_i-x_j)$ that \eqref{eq:osl} does not provide.
Collisions do in fact occur. On $(\R/\mathbb Z)^2$,  $v(x,y)=(2x-\operatorname{sign}x,0)$ is one-sided Lipschitz with $L=2$, and moves two generators at $(\pm s,0)$, $0<s<1/4$, according to $\dot s=2s-1+2^{5/4}(1/4-s)^3\le-\frac12+2^{-19/4}$.
The remedy is to replace $v$ by the smoothed  $v^\varepsilon=\eta_\varepsilon*v$, which is Lipschitz for each $\varepsilon>0$ and satisfies \eqref{eq:osl} with the same $L$.
We use $\alpha_h=\mathscr G/h^{5/2}$, which is locally Lipschitz by Remark~\ref{rem:lipschitz}.
Theorem~\ref{thm:adaptive-energy-convergence} then gives the convergence estimate for $v^\varepsilon$, with constants independent of $\varepsilon$, thanks to the one-sided energy bound above.
Averaging \eqref{eq:osl} against $\eta_\varepsilon$ bounds the distance between the flows of $v^\varepsilon$ and of $v$ by $O(\varepsilon^{1/2})$.
With initial data as in \eqref{eq:adaptive-initial-energy} and $\mu(t):=\Phi(t,\cdot)_\#(\rho_0\,\diff x)$, we obtain
\begin{equation}\label{eq:osl-convergence}
\sup_{t\in[0,T]}W_1\left(\mu^{h,\varepsilon}(t),\mu(t)\right)\le C\|\rho_0\|_{L^1}\left(h^{1/4}+\varepsilon^{1/2}\right)
\end{equation}
for $h,\varepsilon\in(0,1]$, so that $\varepsilon(h)=h$ retains the rate $O(h^{1/4})$.

\FloatBarrier
\section{A numerical illustration} \label{sec:num}

We illustrate the effect of the Lloyd correction on a standard test case for the compressible Euler equations
\begin{align}
\partial_t \rho + \nabla \cdot(\rho v) &= 0, \label{eq:mass}\\
\partial_t(\rho v) + \nabla \cdot(\rho v\otimes v + p I_2) &= 0, \label{eq:mom}\\
\partial_t\left(\rho E\right) + \nabla \cdot\left(\left(\rho E + p\right)v\right) &= 0, \label{eq:energy}
\end{align}
where $\rho$ is the density, $v=(v_1,v_2)$ the velocity, $p$ the pressure, $I_2$ the $2\times2$ identity matrix, and $E=e+\frac12|v|^2$ the total specific energy, $e$ being the internal energy.
The first equation is the continuity equation \eqref{eq:continuity}.
The system is closed by an equation of state; for an ideal gas, $p=(\gamma-1)\rho e$ with $\gamma>1$.
We impose the impermeability condition $v\cdot n=0$ on $(0,T)\times\partial\Omega$, and the domain is $\Omega=(0,1)^2$.
There is a substantial existence theory for \eqref{eq:mass}--\eqref{eq:energy}, ranging from local smooth solutions to weak and entropy solutions; see for instance \cite{Majda1984,Dafermos2016}.

The scheme of \cite{despres2024,desprem} is at the same time a particle method, in the sense of \eqref{eq:rho-def}--\eqref{eq:lag-step-0}, and a finite volume method on the Vorono\"i tessellation of the particles, which is recomputed at every time step.
The velocities of the particles are computed with a Riemann solver, for which we refer to those references.

The initial data are those of a Riemann problem, with $\rho_L=1$, $v_L=(0,0)$ and $p_L=1$ for $x<0.5$, and $\rho_R=0.125$, $v_R=(0,0)$ and $p_R=0.1$ for $x>0.5$, independently of $y$.
The solution is a weak solution consisting of a shock, followed by a contact discontinuity and a rarefaction fan.
The velocity jumps across the shock, so this example lies outside the hypotheses of Theorem~\ref{thm:W1-convergence}; the figures below illustrate the effect of the Lloyd correction, but the convergence theorem does not apply to them.
The strong gradients also trigger numerical instabilities, the particles tending to come extremely close to one another; see \cite{despres2024,desprem}.
In \cite{loulou,kincl}, this is mitigated by a regularization of ALE type.
Here we rely instead on Lloyd's algorithm \eqref{eq:lloyd-step}, which acts as an additional repulsive force between the particles.

Figure~\ref{fig:1} shows the Vorono\"i tessellation at the first recorded time $t=0.0052$, at the intermediate time $t\approx 0.08$ and at the final time $t\approx 0.16$, for the computation without Lloyd correction and with the same $\gamma_L=\gamma_R=1.4$ on both sides.
Some generators come so close to each other that the construction of the Vorono\"i tessellation becomes unstable at the final time.

Figure~\ref{fig:2} shows the same computation with the Lloyd correction.
The generators remain separated, and the computation runs until $t\approx 0.16$ without noticeable instability.

Finally, Figure~\ref{fig:3} shows the same computation with the Lloyd correction and two different adiabatic exponents, $\gamma_L=1.4$ and $\gamma_R=\frac 53$.
We plot the pressure, the velocity and the density at all generators as functions of the horizontal coordinate (in green), together with a reference solution computed on a very fine mesh (in magenta).
Since the computation is purely Lagrangian, the two values of $\gamma$ stay on their own side of the contact discontinuity, which ALE methods such as \cite{loulou,kincl} cannot ensure.
The three waves (shock, contact discontinuity and rarefaction) are at the correct positions.
The only noticeable discrepancy is a wall-heating error at the contact discontinuity, visible on the density.
This confirms that the Lloyd correction does not affect the consistency of the method.

\begin{figure}[htbp]
\centering
\includegraphics[width=\linewidth]{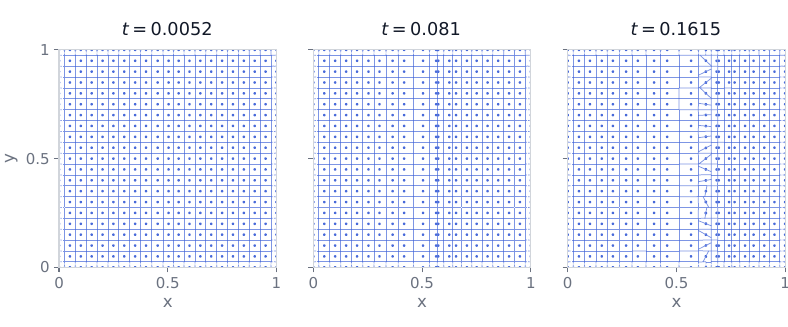}
\caption{Shock calculation on a 2D Vorono\"i mesh, without Lloyd algorithm.
From left to right, the first recorded snapshot $t=0.0052$, then $t=0.081$ and $t=0.1615$.}
\label{fig:1}
\end{figure}

\begin{figure}[htbp]
\centering
\includegraphics[width=\linewidth]{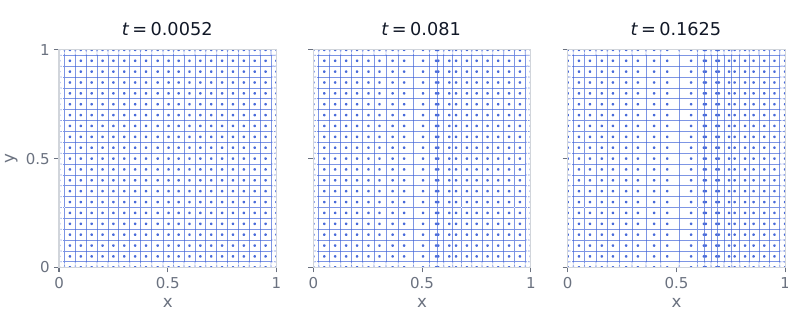}
\caption{Shock calculation on a 2D Vorono\"i mesh, with Lloyd algorithm.
From left to right, the first recorded snapshot $t=0.0052$, then $t=0.081$ and $t=0.1625$.}
\label{fig:2}
\end{figure}

\begin{figure}[htbp]
\centering
\includegraphics[width=\linewidth]{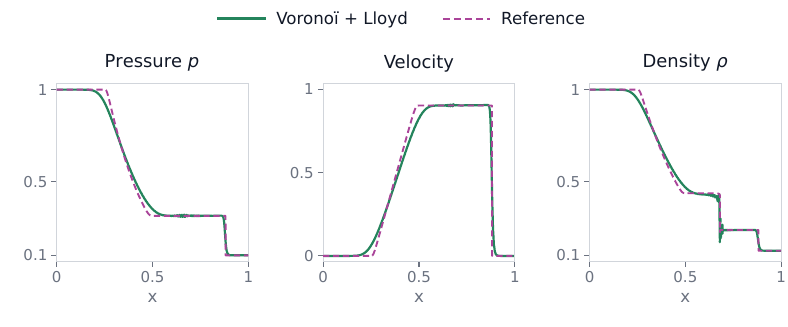}
\caption{Bi-fluid Riemann problem ($\gamma_L=1.4$ and $\gamma_R=\frac 53$).
Numerical solution calculated with a Vorono\"i+Lloyd method versus a reference solution.
From left to right, the pressure, the velocity and the density.}
\label{fig:3}
\end{figure}


\section{Outlook}\label{sec:outlook}

There are several avenues that remain to be explored regarding various enhancements of the proposed stabilization. 

A first one is whether Laguerre cells can provide improved theoretical or numerical conclusions. Therein, the weight carried by each particle is an extra degree of freedom, which ought to let the mesh follow a strongly inhomogeneous density without breaking the semi-discrete transport structure our estimates rely on.

A second avenue concerns the centroid itself, which is only one choice of repulsion among many, and it comes with the energy $\mathscr F$. But energies of this kind appear in a variety of other settings including in the mean-field description of Transformers, where tokens attract or repel through an attention kernel whose geometry decides whether they cluster \cite{geshkovski2023emergence,geshkovski2025mathematical,alcalde2025attention}. 

Finally, investigating which mechanism allows to generalize these results to a BV velocity, and at what cost in consistency, is also of interest.

\smallskip

\noindent
\textbf{Funding.} This work received funding by Agence Nationale de la Recherche, program France 2030, reference ANR-23- PEIA-0004.
BG’s research was supported by a Sorbonne Emergences grant and a gift from Google.

\smallskip

\noindent
\textbf{AI use.} AI-based tools were used for improving the plotting scripts of an existing code for compressible Euler, and for proofreading the manuscript. 
ChatGPT Pro was also used to obtain the extension of all results to the one-sided Lipschitz case presented in \Secref{sec:osl}, as well as for generating the Lean formalization, which were then reviewed by the authors.  The authors have verified all AI-assisted output and take full responsibility for the content of the paper.

\smallskip

\noindent
\textbf{Lean formalization.} Parts of \Appref{sec:appendix-osl} have been formalized in Lean.
The formalization proves $\sum_\Gamma d_\Gamma\int_\Gamma f\,\diff s=4\mathscr F$ and $\mathscr T_v\le4L\mathscr F$, including the pairing of the faces and the cancellation of the faces a cell shares with its own translates.
It also proves that convolution preserves the one-sided constant, the cancellation coming from $\int z\,\eta_\varepsilon(z)\,\diff z=0$, the Gronwall bound on the trajectory error, and the resulting $O(\varepsilon^{1/2})$ bound in Wasserstein distance.
The face representation, the local divergence formulas, the existence of trajectories and the one-sided bound for Filippov selections are left as hypotheses.
The files are at \url{https://github.com/borjang/2026-lloyd-transport}.

\appendix

\section{An envelope theorem for the gradient}\label{sec:appendix-envelope}

Let $\Omega\subset\mathbb R^2$ be a bounded Lipschitz domain and let $N\in\mathbb N$.
For a configuration $X=(x_1,\dots,x_N)\in\Omega^N$ with pairwise distinct generators, let
\[
\phi(X;x):= \min_{1\le i\le N} |x-x_i|^2,
\qquad x\in\Omega .
\]
The Vorono\"i skeleton $\mathscr S(X):=\bigcup_{i\neq j}\{x\in\Omega:\ |x-x_i|=|x-x_j|\}$ is contained in a finite union of lines, hence has measure zero, and for every $x\in\Omega\setminus\mathscr S(X)$ there is a unique index $i(x)$ with $\phi(X;x)=|x-x_{i(x)}|^2$, namely the index with $x\in\Omega_{i(x)}(X)$.
Therefore the energy \eqref{eq:energy-F} satisfies $\mathscr F(X)=\int_\Omega\phi(X;x)\,\diff x$.

The following result is known in the literature, but we do not know whether the proof is new.
We owe this version to Rapha\"el Berthier.

\begin{proposition}\label{prop:env-gradient}
Let $\mathcal U\subset\Omega^N$ be an open set such that the generators of every $X\in\mathcal U$ are pairwise distinct.
Then $\mathscr F\in C^1(\mathcal U)$ and
\begin{equation}\label{eq:env-gradient}
\nabla_{x_i}\mathscr F(X)=2\int_{\Omega_i(X)}(x_i-x)\,\diff x
=2\,V_i(X)\,\left(x_i-c_i(X)\right),
\qquad 1\le i\le N.
\end{equation}
\end{proposition}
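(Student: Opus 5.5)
The natural route is the envelope argument. We write $\mathscr F(X)=\int_\Omega\phi(X;x)\,\diff x$ with $\phi(X;x)=\min_i|x-x_i|^2$, which is justified just above since the skeleton $\mathscr S(X)$ has measure zero. For each fixed $x$, the map $X\mapsto\phi(X;x)$ is a minimum of finitely many smooth functions. Hence it is locally Lipschitz in $X$, with a Lipschitz constant bounded uniformly in $x\in\Omega$ by $2\operatorname{diam}\Omega$ per coordinate, since $\bigl||x-x_i|^2-|x-x_i'|^2\bigr|\le(|x-x_i|+|x-x_i'|)|x_i-x_i'|$.

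Fix $X\in\mathcal U$ and $x\in\Omega\setminus\mathscr S(X)$. Then the minimizing index $i(x)$ is strict. By continuity it stays the unique minimizer for all $Y$ in a neighborhood of $X$, the neighborhood depending on $x$. On that neighborhood $\phi(Y;x)=|x-y_{i(x)}|^2$, so $\phi(\cdot;x)$ is differentiable at $X$ with
\[
\nabla_{x_i}\phi(X;x)=2(x_i-x)\mathbf 1_{\Omega_i(X)}(x).
\]
The difference quotients $\bigl(\phi(X+\tau e;x)-\phi(X;x)\bigr)/\tau$ are bounded by $2\operatorname{diam}\Omega\,|e|$ uniformly in $x$, and they converge for a.e. $x$. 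Dominated convergence on the bounded set $\Omega$ then shows that $\mathscr F$ is Gâteaux differentiable at $X$, with directional derivative $\sum_i 2\int_{\Omega_i(X)}(x_i-x)\cdot e_i\,\diff x$. This yields \eqref{eq:env-gradient} once we rewrite $\int_{\Omega_i}(x_i-x)\,\diff x=V_i(x_i-c_i)$.

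It remains to upgrade this to $C^1$. A Gâteaux gradient that is continuous implies Fréchet differentiability, so the key step, which I expect to be the main obstacle, is the continuity of $X\mapsto\int_{\Omega_i(X)}(x_i-x)\,\diff x$ on $\mathcal U$. I would prove that $\mathbf 1_{\Omega_i(X^n)}\to\mathbf 1_{\Omega_i(X)}$ pointwise off $\mathscr S(X)$ whenever $X^n\to X$ in $\mathcal U$. Indeed, for $x\notin\mathscr S(X)$ the strict inequalities $|x-x_i|<|x-x_j|$, or their reverses, persist for $X^n$ close to $X$. Since $|\mathscr S(X)|=0$, the indicators converge a.e., and the integrands are bounded by $\operatorname{diam}\Omega$. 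Dominated convergence then gives continuity of both $V_i$ and $\int_{\Omega_i}x\,\diff x$. Pairwise distinctness on $\mathcal U$ is used only to guarantee that $\mathscr S(X)$ is a finite union of lines, hence null. On the torus the same argument applies verbatim using the periodic cells $P_i$.
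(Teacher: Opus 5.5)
Your proof is correct and follows essentially the same envelope argument as the paper: a strict minimizing index off the null skeleton $\mathscr S(X)$, a uniform bound on the difference quotients, dominated convergence to identify the directional derivative, and continuity of that derivative in $X$ to conclude $\mathscr F\in C^1(\mathcal U)$. The one addition is that you prove the continuity of $V_i$ and $\int_{\Omega_i(X)}x\,\diff x$ through a.e.\ convergence of the indicators, whereas the paper only asserts it (Remark~\ref{rem:lipschitz} separately gives the stronger local Lipschitz bound).
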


\begin{proof}
Fix $X\in\mathcal U$ and a direction $H=(h_1,\dots,h_N)\in(\mathbb R^2)^N$, and set $X^\varepsilon:=X+\varepsilon H$ for $\varepsilon$ small.
For $x\in\Omega$ write $f_i(\varepsilon;x):=|x-x_i-\varepsilon h_i|^2$, so that $\phi(X^\varepsilon;x)=\min_{1\le i\le N}f_i(\varepsilon;x)$.
For $x\in\Omega\setminus\mathscr S(X)$, the minimum at $\varepsilon=0$ is attained at the single index $i(x)$, with a strict inequality against every other index.
By continuity the same index attains the minimum for $|\varepsilon|$ small, and therefore
\[
\frac{\phi(X^\varepsilon;x)-\phi(X;x)}{\varepsilon}
=\frac{f_{i(x)}(\varepsilon;x)-f_{i(x)}(0;x)}{\varepsilon}
\longrightarrow2\,(x_{i(x)}-x)\cdot h_{i(x)}
\qquad(\varepsilon\to0).
\]
On the other hand, $f_i(\varepsilon;x)-f_i(0;x)=-2\varepsilon (x-x_i)\cdot h_i + \varepsilon^2|h_i|^2$, so that for $|\varepsilon|\le 1$
\[
\left|\frac{\phi(X^\varepsilon;x)-\phi(X;x)}{\varepsilon}\right|
\le
\max_{1\le i\le N}\left|\frac{f_i(\varepsilon;x)-f_i(0;x)}{\varepsilon}\right|
\le C_2,
\]
where $C_2:=\max_{1\le i\le N}(2C_1|h_i|+|h_i|^2)$ and $C_1:=\operatorname{diam}\Omega$ bounds $|x-x_i|$.
The difference quotients are thus dominated by the constant $C_2$, which is integrable on the bounded set $\Omega$, and dominated convergence gives
\[
\lim_{\varepsilon\to0}\frac{\mathscr F(X^\varepsilon)-\mathscr F(X)}{\varepsilon}
=
\int_\Omega 2\,(x_{i(x)}-x)\cdot h_{i(x)}\,\diff x
=
2\sum_{i=1}^N\int_{\Omega_i(X)} (x_i-x)\cdot h_i\,\diff x,
\]
since $\{x\in\Omega:\ i(x)=i\}=\Omega_i(X)$ up to a set of measure zero.
This is linear and continuous in $H$, and it depends continuously on $X$ because the cell volumes and centroids do.
Hence $\mathscr F$ is continuously differentiable on $\mathcal U$, with gradient \eqref{eq:env-gradient}.
\end{proof}

The same proof applies on $\Omega=\R^2/\mathcal L$, where $|x-x_i|=\min_{k\in\mathcal L}|\tilde x-\tilde x_i-k|$ for representatives $\tilde x,\tilde x_i\in\R^2$. For $\tilde x$ in a bounded subset of $\R^2$, only finitely many $k\in\mathcal L$ are relevant in this minimum, and the argument is unchanged.

\begin{remark}\label{rem:lipschitz}
The maps $X\mapsto V_i(X)$ and $X\mapsto V_i(X)c_i(X)=\int_{\Omega_i(X)}x\,\diff x$ are locally Lipschitz on $\mathcal U$, whether or not the Vorono\"i neighbors change.
Indeed, each cell $\Omega_i(X)$ is the intersection of $\Omega$ with finitely many half-planes, bounded by the bisectors of $x_i$ and $x_j$, $j\ne i$ (on the torus, of $x_i$ and $x_j+k$, $k\in\mathcal L$).
These lines depend smoothly on $X$ as long as the generators stay separated, so the symmetric difference of $\Omega_i(X)$ and $\Omega_i(X')$ has area $O(|X-X'|)$, locally uniformly on $\mathcal U$.
Since $V_i>0$ on $\mathcal U$, the centroids $c_i$ and the energy $\mathscr G$ defined in \eqref{eq:energy-G} are locally Lipschitz on $\mathcal U$ as well.
\end{remark}

\section{Complements on one-sided Lipschitz velocities}\label{sec:appendix-osl}

We gather here the computations left aside in \Secref{sec:osl}.
Throughout, we abbreviate $V_T:=\int_0^T\|v(t,\cdot)\|_{L^\infty}\,\diff t$.
When $v(t,\cdot)$ is continuous write $v_i:=v(t,x_i)$, and when it is not, let $v_i$ be any element of the Filippov set $K[v](t,x_i)$, the intersection over the neighborhoods of $x_i$ of the closed convex hulls of the essential values of $v(t,\cdot)$ there.
These velocities obey the same one-sided bound $(u-w)\cdot(x-y)\le L(t)|x-y|^2$ for all $u\in K[v](t,x)$ and $w\in K[v](t,y)$, and almost every $t$.
We take essential values of $v(t,\cdot)$ at points near $x$ and near $y$ and pass to the limit in \eqref{eq:osl}, which is justified because $v$ is bounded, and we can then use that the left-hand side is linear in $u$ and in $w$ to reach their convex hulls.

Consider first the term $\mathscr T_v$.
In \Secref{subsec:adaptive-energy} it was estimated by an integration by parts, which requires a derivative of $v$ and is no longer available.
We use instead that \eqref{eq:osl} is an inequality between two points, and that every face of the tessellation is associated with a pair of generators.
Let $X\in\Omega^N$ have pairwise distinct generators, let $\{P_i\}$ be the periodic cells of \Secref{subsec:adaptive-energy}, and set $f(y):=\min_j|y-x_j|^2$, so that $\int_\Omega f=\mathscr F(X)$.
Let $\Gamma$ run over one representative of each unoriented face, including the faces shared by a cell and a translate of itself, and, when $\Gamma$ separates $P_i$ from $P_j+k$, write $d_\Gamma:=|x_i-x_j-k|$ and $n_\Gamma:=(x_j+k-x_i)/d_\Gamma$.
Since $f=|y-x_i|^2=|y-x_j-k|^2$ on $\Gamma$, the identity \eqref{eq:Tv_faces} may be rewritten as
\[
\mathscr T_v=-\sum_\Gamma\int_\Gamma f\left(v_i-v_j\right)\cdot n_\Gamma\,\diff s
=\sum_\Gamma\frac{\left(v_i-v_j\right)\cdot\left(x_i-x_j-k\right)}{d_\Gamma}\int_\Gamma f\,\diff s .
\]
Each numerator is the left-hand side of \eqref{eq:osl} for the pair $x_i$ and $x_j+k$, and is therefore at most $L(t)d_\Gamma^2$.
It remains to identify the weighted sum of face integrals, which follows from the divergence theorem.
Indeed $\operatorname{div}_y\left(|y-x_i|^2(y-x_i)\right)=4|y-x_i|^2$, while $(y-x_i)\cdot n_i=d_\Gamma/2$ on $\Gamma$, since $\Gamma$ lies on the bisector of $x_i$ and $x_j+k$; integration over each $P_i$ and summation over $i$, in which every face is counted once from each side, give $\sum_\Gamma d_\Gamma\int_\Gamma f\,\diff s=4\mathscr F(X)$.
Therefore $\mathscr T_v\le4L(t)\mathscr F(X)$, and \eqref{eq:adaptive-energy-identity} becomes \eqref{eq:adaptive-global-energy-inequality} as before.

The separation of the generators requires another argument.
While the Lloyd term is repulsive, it doesn't prevent collisions by itself. Indeed let $\Omega=(\R/\mathbb Z)^2$ and let $v(x,y):=\left(2x-\operatorname{sign}x,\,0\right)$ for $-1/2<x<1/2$, extended periodically.
Its jump at the origin is compressive, and its one-sided Lipschitz constant is $L=2$.
Let the two generators be at $(\pm s,0)$ with $0<s<1/4$, so that $N=2$ and $h=2^{-1/2}$.
Their cells are the vertical strips separated by $x=0$ and $x=1/2$, of volume $1/2$ and with centroids $(\pm1/4,0)$, so that $\mathscr G=(1/4-s)^2$ and the configuration remains symmetric.
With $\alpha_h=\mathscr G/h^{5/2}$ we obtain
\[
\dot s=2s-1+2^{5/4}\left(1/4-s\right)^3\le-\tfrac12+2^{-19/4}<0 ,
\]
the drift towards the origin being of order one, while the Lloyd correction does not exceed $2^{-19/4}$.
The two generators therefore collide in finite time.
In the proof of Theorem~\ref{thm:adaptive-energy-convergence}, they were kept apart by a lower bound on $\left(v(t,x_i)-v(t,x_j+k)\right)\cdot w$, with $w=x_i-x_j-k$, which \eqref{eq:osl} does not provide.

This difficulty can be avoided by regularizing the velocity in space, at a cost which we now estimate.
Let $\eta_\varepsilon$ be a nonnegative symmetric mollifier of integral one supported in $B(0,\varepsilon)$, and let $v^\varepsilon:=\eta_\varepsilon*v$ denote the periodic convolution.
Averaging \eqref{eq:osl} shows that $v^\varepsilon$ satisfies it with the same $L(t)$, and $v^\varepsilon(t,\cdot)$ is Lipschitz for each fixed $\varepsilon>0$.
For the locally Lipschitz feedback $\alpha_h=\mathscr G/h^{5/2}$, Theorem~\ref{thm:adaptive-energy-convergence} therefore gives a global solution with distinct generators.
With the face estimate above in place of the integration by parts, its proof gives the energy and convergence bounds with constants independent of $\varepsilon$.
The Lipschitz constant of $v^\varepsilon$ is not controlled uniformly in $\varepsilon$, but it is needed only to produce the solution and to keep the generators apart at fixed $\varepsilon$.
The energy and transport inequalities do not need it. They rely on the one-sided constant $L$, which is the same for $v^\varepsilon$ and $v$, and on $\|v^\varepsilon(t,\cdot)\|_{L^\infty}\le\|v(t,\cdot)\|_{L^\infty}$, so that the constants above depend only on $C_1$, $T$, $\Omega$, $\Lambda(T)$ and $V_T$.
It remains to compare the two flows.
Let $x(\cdot)$ and $x^\varepsilon(\cdot)$ be continuous lifts to $\R^2$ of trajectories of $v$ and of $v^\varepsilon$ issued from the same point, and set $e:=x^\varepsilon-x$ and $w:=\dot x\in K[v](t,x)$.
Applying \eqref{eq:osl} to the pair $x^\varepsilon-z$ and $x$, and averaging against $\eta_\varepsilon(z)\,\diff z$, the terms in $z$ carrying $w$ vanish since $\int z\,\eta_\varepsilon(z)\,\diff z=0$, and we obtain
\[
\frac12\frac{\diff}{\diff t}|e|^2=\left(v^\varepsilon(t,x^\varepsilon)-w\right)\cdot e
\le L(t)\left(|e|^2+\varepsilon^2\right)+\varepsilon\|v(t,\cdot)\|_{L^\infty} .
\]
Since $e(0)=0$, Gronwall's lemma yields $\sup_{t\in[0,T]}|e(t)|^2\le 2e^{2\Lambda(T)}\left(\varepsilon^2\Lambda(T)+\varepsilon V_T\right)$, uniformly in the initial point.
Coupling the two flows by their common initial measure, we deduce that $\sup_{t\in[0,T]}W_1(\mu^\varepsilon(t),\mu(t))\le C\|\rho_0\|_{L^1}\varepsilon^{1/2}$ for $\varepsilon\le1$, and \eqref{eq:osl-convergence} follows from \eqref{eq:adaptive-transport-convergence}, applied to $v^\varepsilon$, and the triangle inequality.

\bibliographystyle{plain}
\bibliography{refs-v7}

@book{Majda1984,
  author    = {Andrew Majda},
  title     = {Compressible Fluid Flow and Systems of Conservation Laws in Several Space Variables},
  publisher = {Springer},
  year      = {1984},
  series    = {Applied Mathematical Sciences},
  volume    = {53}
}

@book{Dafermos2016,
  author    = {Constantine M. Dafermos},
  title     = {Hyperbolic Conservation Laws in Continuum Physics},
  publisher = {Springer},
  edition   = {4},
  year      = {2016},
  series    = {Grundlehren der mathematischen Wissenschaften},
  volume    = {325}
}

@book{villani,
  title={Optimal transport: old and new},
  author={Villani, C{\'e}dric},
  volume={338},
  year={2009},
  publisher={Springer}
}

@article{bouja,
  title={Uniqueness and weak stability for multi-dimensional transport equations with one-sided Lipschitz coefficient},
  author={Bouchut, Francois and James, Francois and Mancini, Simona},
  journal={Annali della Scuola Normale Superiore di Pisa},
  volume={4},
  number={1},
  pages={1--25},
  year={2005}
}

@article{despres2024,
  author  = {Bruno Despr\'es},
  title   = {{Compressible Lagrangian Voronoi meshes and particle dynamics with shocks}},
  journal = {Computer Methods in Applied Mechanics and Engineering},
  volume  = {419},
  pages   = {116648},
  year    = {2024}
}

@article{Springel2010,
  author  = {Volker Springel},
  title   = {E pur si muove: {G}alilean-invariant cosmological hydrodynamical simulations on a moving mesh},
  journal = {Monthly Notices of the Royal Astronomical Society},
  volume  = {401},
  number  = {2},
  pages   = {791--851},
  year    = {2010}
}

@article{MerigotMirebeau2016,
  author  = {Quentin M\'erigot and Jean-Marie Mirebeau},
  title   = {Minimal geodesics along volume-preserving maps, through semidiscrete optimal transport},
  journal = {SIAM Journal on Numerical Analysis},
  volume  = {54},
  number  = {6},
  pages   = {3465--3492},
  year    = {2016}
}

@article{GallouetMerigot2018,
  author  = {Thomas Gallou\"et and Quentin M\'erigot},
  title   = {A {L}agrangian scheme \`a la {B}renier for the incompressible {E}uler equations},
  journal = {Foundations of Computational Mathematics},
  volume  = {18},
  number  = {4},
  pages   = {835--865},
  year    = {2018}
}

@article{DuFaberGunzburger1999,
  author  = {Qiang Du and Vance Faber and Max Gunzburger},
  title   = {Centroidal {V}orono\"i tessellations: Applications and algorithms},
  journal = {SIAM Review},
  volume  = {41},
  number  = {4},
  pages   = {637--676},
  year    = {1999},
  doi     = {10.1137/S0036144599352836}
}

@book{GrafLuschgy2000,
  author    = {Siegfried Graf and Harald Luschgy},
  title     = {Foundations of Quantization for Probability Distributions},
  publisher = {Springer},
  year      = {2000},
  series    = {Lecture Notes in Mathematics},
  volume    = {1730},
  doi       = {10.1007/BFb0103945}
}

@article{alcalde2025attention,
  title={Attention's forward pass and Frank-Wolfe},
  author={Alcalde, Albert and Geshkovski, Borjan and Ruiz-Balet, Dom{\`e}nec},
  journal={arXiv preprint arXiv:2508.09628},
  year={2025}
}

@article{geshkovski2025mathematical,
  title={A mathematical perspective on transformers},
  author={Geshkovski, Borjan and Letrouit, Cyril and Polyanskiy, Yury and Rigollet, Philippe},
  journal={Bulletin of the American Mathematical Society},
  volume={62},
  number={3},
  pages={427--479},
  year={2025}
}

@article{Lloyd1982,
  author  = {Stuart P. Lloyd},
  title   = {Least squares quantization in {PCM}},
  journal = {IEEE Transactions on Information Theory},
  volume  = {28},
  number  = {2},
  pages   = {129--137},
  year    = {1982}
}

@inproceedings{MacQueen1967,
  author    = {J. B. MacQueen},
  title     = {Some methods for classification and analysis of multivariate observations},
  booktitle = {Proceedings of the Fifth Berkeley Symposium on Mathematical Statistics and Probability},
  volume    = {1},
  pages     = {281--297},
  year      = {1967}
}

@article{caglioti2018quantization,
  title={Quantization of Measures and Gradient Flows: a Perturbative Approach in the 2-Dimensional Case},
  author={Caglioti, Emanuele and Golse, Fran{\c{c}}ois and Iacobelli, Mikaela},
  journal={Annales de l'Institut Henri Poincar{\'e} C, Analyse non lin{\'e}aire},
  volume={35},
  pages={1531--1555},
  year={2018}
}

@article{DECAMPOS2022114680,
title = {A New Updated Reference Lagrangian Smooth Particle Hydrodynamics algorithm for isothermal elasticity and elasto-plasticity},
journal = {Computer Methods in Applied Mechanics and Engineering},
volume = {392},
pages = {114680},
year = {2022},
author = {Paulo R. Refachinho {de Campos} and Antonio J. Gil and Chun Hean Lee and Matteo Giacomini and Javier Bonet},
}

@article{DEVILLERS1996315,
title = {Queries on Voronoi diagrams of moving points},
journal = {Computational Geometry},
volume = {6},
number = {5},
pages = {315-327},
year = {1996},
note = {Sixth Canadian Conference on Computational Geometry},
author = {O. Devillers and M. Golin and K. Kedem and S. Schirra}
}

@article{DUQUE2023268,
title = {A unified derivation of Voronoi, power, and finite-element Lagrangian computational fluid dynamics},
journal = {European Journal of Mechanics - B/Fluids},
volume = {98},
pages = {268-278},
year = {2023},
author = {Daniel Duque},
}

@article{PhysRevLett.83.1775,
  title = {From Molecular Dynamics to Dissipative Particle Dynamics},
  author = {Flekk\o{}y, Eirik G. and Coveney, Peter V.},
  journal = {Phys. Rev. Lett.},
  volume = {83},
  issue = {9},
  pages = {1775--1778},
  year = {1999},
  month = {Aug},
  publisher = {American Physical Society},
}

@article{GABURRO2020109167,
author = {Elena Gaburro and Walter Boscheri and Simone Chiocchetti and Christian Klingenberg and Volker Springel and Michael Dumbser},
title = {High order direct Arbitrary-Lagrangian-Eulerian schemes on moving Voronoi meshes with topology changes},
journal = {Journal of Computational Physics},
volume = {407},
pages = {109167},
year = {2020},
url = {https://www.sciencedirect.com/science/article/pii/S0021999119308721},
}

@article{kincl,
author = {Kincl, Ondrej and Peshkov, Ilya and Boscheri, Walter},
title = {Semi-implicit Lagrangian Voronoi approximation for the incompressible Navier--Stokes equations},
journal = {International Journal for Numerical Methods in Fluids},
volume = {97},
number = {1},
pages = {88-115},
year = {2025}
}

@article{pasta,
author = {Pasta, J.R.  and Ulam, S.},
title = {Heuristic Numerical Work in Some Problems of Hydrodynamics},
journal = {Mathematical Tables and Other Aids to Computation},
volume = {13},
pages = {1-12},
year = {1959}
}

@article{RUSSO199384,
title = {A Deterministic Vortex Method for the Navier-Stokes Equations},
journal = {Journal of Computational Physics},
volume = {108},
number = {1},
pages = {84-94},
year = {1993},
author = {Giovanni Russo},
}

@MISC{shasha,
       author = {{Shashkov}, Mikhail Y. and {Solovjov}, Andrey V.},
        title = "{Numerical simulation of two-dimensional flows by the free-Lagrangian method}",
         year = 1991,
        month = jan,
        pages = {32767},
       adsurl = {https://ui.adsabs.harvard.edu/abs/1991STIN...9232767S}
}

@article{gaga,
title={Convergence of a Lagrangian discretization for barotropic fluids and porous media flow},
  author={Gallou{\"e}t, Thomas O and Merigot, Quentin and Natale, Andrea},
  journal={SIAM Journal on Mathematical Analysis},
  volume={54},
  number={3},
  pages={2990--3018},
  year={2022},
  publisher={SIAM}
}

@article{caca, 
title={Simulation of multiphase porous media flows with minimising movement and finite volume schemes}, 
volume={30}, 
number={6}, 
journal={European Journal of Applied Mathematics}, 
author={Cances, C.  and Gallou\"et, T. and Laborde, M. and Monsaingeon, L.},
 year={2019}, pages={1123--1152}
 }

@article{desprem,
title = {Remarks on a new particle method},
journal = {Journal of Computational Physics},
volume = {523},
pages = {113662},
year = {2025},
author = {Bruno Després and Ronald Remmerswaal},
}

@article{loulou,
title = {ReALE: A reconnection-based arbitrary-Lagrangian–Eulerian method},
journal = {Journal of Computational Physics},
volume = {229},
number = {12},
pages = {4724-4761},
year = {2010},
author = {Raphaël Loubère and Pierre-Henri Maire and Mikhail Shashkov and Jérôme Breil and Stéphane Galera},
}

@inproceedings{geshkovski2023emergence,
  title={The emergence of clusters in self-attention dynamics},
  author={Geshkovski, Borjan and Letrouit, Cyril and Polyanskiy, Yury and Rigollet, Philippe},
  booktitle={Advances in Neural Information Processing Systems},
  volume={36},
  year={2023}
}

@article{KriegSonnleitner2024,
  author  = {David Krieg and Mathias Sonnleitner},
  title   = {Random points are optimal for the approximation of {Sobolev} functions},
  journal = {IMA Journal of Numerical Analysis},
  volume  = {44},
  number  = {3},
  pages   = {1346--1371},
  year    = {2024},
  doi     = {10.1093/imanum/drad014}
}

@article{ElNmeirLuschgyPages2022,
  author  = {El Nmeir, Rancy and Luschgy, Harald and Pag{\`e}s, Gilles},
  title   = {New approach to greedy vector quantization},
  journal = {Bernoulli},
  volume  = {28},
  number  = {1},
  pages   = {424--452},
  year    = {2022},
  doi     = {10.3150/21-BEJ1350}
}

@article{PoupaudRascle1997,
  author  = {Poupaud, Fran{\c c}ois and Rascle, Michel},
  title   = {Measure solutions to the linear multi-dimensional transport equation with non-smooth coefficients},
  journal = {Communications in Partial Differential Equations},
  volume  = {22},
  number  = {1--2},
  pages   = {337--358},
  year    = {1997},
  doi     = {10.1080/03605309708821265}
}

@article{DelarueLagoutiereVauchelet2017,
  author  = {Delarue, Fran{\c c}ois and Lagouti{\`e}re, Fr{\'e}d{\'e}ric and Vauchelet, Nicolas},
  title   = {Convergence order of upwind type schemes for transport equations with discontinuous coefficients},
  journal = {Journal de Math{\'e}matiques Pures et Appliqu{\'e}es},
  volume  = {108},
  number  = {6},
  pages   = {918--951},
  year    = {2017},
  doi     = {10.1016/j.matpur.2017.05.012}
}

@article{LionsSeeger2024,
  author  = {Lions, Pierre-Louis and Seeger, Benjamin},
  title   = {Transport equations and flows with one-sided {Lipschitz} velocity fields},
  journal = {Archive for Rational Mechanics and Analysis},
  volume  = {248},
  number  = {5},
  pages   = {Paper No.~86},
  year    = {2024},
  doi     = {10.1007/s00205-024-02029-0}
}

@article{GrafLuschgyPages2012,
  author  = {Siegfried Graf and Harald Luschgy and Gilles Pag{\`e}s},
  title   = {The local quantization behavior of absolutely continuous probabilities},
  journal = {The Annals of Probability},
  volume  = {40},
  number  = {4},
  pages   = {1795--1828},
  year    = {2012},
  doi     = {10.1214/11-AOP663}
}

@article{Gruber2004,
  author  = {Peter M. Gruber},
  title   = {Optimum quantization and its applications},
  journal = {Advances in Mathematics},
  volume  = {186},
  number  = {2},
  pages   = {456--497},
  year    = {2004},
  doi     = {10.1016/j.aim.2003.07.017}
}

\noindent\begin{minipage}[t]{.48\textwidth}
    {\footnotesize{\bf Bruno Despr\'es}\par
      Laboratoire Jacques-Louis Lions\par
      Inria \& Sorbonne Université\par
      4 Place Jussieu\par
      75005 Paris, France\par
     \par
      e-mail: \href{mailto:bruno.despres@inria.fr}{\textcolor{blue}{\scriptsize bruno.despres@inria.fr}}
      }
    \end{minipage}\hfill%
    \begin{minipage}[t]{.48\textwidth}
      {\footnotesize{\bf Borjan Geshkovski}\par
      Laboratoire Jacques-Louis Lions\par
      Inria \& Sorbonne Université\par
      4 Place Jussieu\par
      75005 Paris, France\par
     \par
      e-mail: \href{mailto:borjan.geshkovski@inria.fr}{\textcolor{blue}{\scriptsize borjan.geshkovski@inria.fr}}
      }
    \end{minipage}%

\end{document}